\documentclass[12pt, a4paper, reqno]{amsart}
\usepackage[english]{babel}
\usepackage{amsmath,amsfonts,amssymb,amsthm}
\usepackage{hyperref}
\usepackage{graphicx}
\usepackage{color}
\usepackage{caption}
\usepackage{subcaption}
\usepackage{longtable}

\bibliographystyle{unsrturl}

\usepackage[margin=2.5cm]{geometry}

\newtheorem{thm}{Theorem}[section]
\newtheorem{lem}[thm]{Lemma}

\newtheorem{con}[thm]{Conjecture}

\theoremstyle{definition}
\newtheorem{defi}[thm]{Definition}
\newtheorem{exam}[thm]{Example}
\theoremstyle{remark}
\newtheorem{rem}{Remark}

\newcommand { \ib }[1] {\textit{\textbf{#1}}}
\newcommand{\bd}{\stackrel{\rm bd}{\sim}}

\newcommand { \diag }{\mathop{\rm{diag}}\nolimits}

\newcommand{\dotcup}{\ensuremath{\mathaccent\cdot\cup}}

\DeclareMathOperator{\dens}{dens}
\DeclareMathOperator{\freq}{freq}

\newcommand{\R}{\mathbb{R}}
\newcommand{\Q}{\mathbb{Q}}
\newcommand{\Z}{\mathbb{Z}}
\newcommand{\C}{\mathbb{C}}
\newcommand{\X}{\mathbb{X}}

\begin{document}
\renewcommand{\ib}{\mathbf}
\renewcommand{\proofname}{Proof}
\renewcommand{\phi}{\varphi}
\renewcommand{\epsilon}{\varepsilon}
\newcommand{\conv}{\mathrm{conv}}

\title[Pisot substitutions, CPS and fractal BRS]{Pisot substitution sequences, one dimensional
cut-and-project sets and bounded remainder sets with fractal boundary}
\author{Dirk Frettl\"oh}
\address{Technische Fakult\"at, Bielefeld University}
\email{dirk.frettloeh@math.uni-bielefeld.de}

\author{Alexey~Garber}
\address{School of Mathematical \& Statistical Sciences, The University of Texas Rio Grande Valley, 1 West University Blvd., Brownsville, TX 78520, USA.}
\email{alexeygarber@gmail.com}


\date{\today}

\begin{abstract}
This paper uses a connection between bounded remainder sets in $\R^d$ and 
cut-and-project sets in $\R$ together with the fact that each one-dimensional 
Pisot substitution sequence is bounded distance equivalent to some lattice
in order to construct several bounded remainder sets with fractal boundary. 
Moreover it is shown that there are cut-and-project sets being not bounded 
distance equivalent to each other even if they are locally indistinguishable, 
more precisely: even if they are contained in the same hull.
\end{abstract}

\maketitle

\section{Introduction}
Number theorists study discrepancy sequences, which is currently a
very active field of research thanks to Tao and a polymath project \cite{tao}.
A lot of literature is dedicated to this subject. To mention just one example:
a special issue of \emph{Indagatioens Mathematicae} \cite{corput}
dedicated to Johannes Gualtherus van der Corput contains several results 
on discrepancy.

A particular part of this subject are bounded remainder sets (BRS). BRS have 
been studied intensely by several authors,
including early works by Hecke \cite{Hec}, Ostrowski \cite{Ost}, Sz\"usz 
\cite{Sz} and Kesten \cite{Kes}; see for instance \cite{GLev,HK} and 
references therein for an overview. BRS are sets $P \subset [0,1]^d$ with 
uniformly bounded discrepancy with respect to some
(discrete) toral rotation; see below for details. BRS have been studied
in the context of Diophantine approximation, dynamical systems or
quasi Monte Carlo methods.

Cut-and-project sets (CPS) and substitution sequences are (infinite)
discrete point sets in $\R$. They are central 
objects of study in the theory of aperiodic order, see \cite{BG} and
references therein. There is a strong connection between BRS and 
(windows of) CPS. This connection was implicit in several papers 
(see for instance \cite{DO2, HoZ}) but up to the knowledge of the 
authors the connection was explicitly spelled out only in \cite{H,HK}.

Here we use this connection together with several other results
in order to obtain many explicit examples of BRS with fractal boundary. 
Along the way we 
re-prove some of these results, namely the connection between
CPS and BRS (\cite{HK} respectively Theorem \ref{thm:brs-cps}), and 
the fact that the point sets generated by Pisot substitution 
are bounded distance equivalent to a lattice (\cite{HoZ}
respectively Theorem \ref{thm:pisot-bde}). We also use partial 
answers to the Pisot conjecture (Conjecture \ref{con:pisot}).

Independently we prove that two CPS are bounded distance equivalent
if their windows are equidecomposable by appropriate vectors
(Theorem \ref{thm:win-bde}), but that there are CPS with the
same data but shifted windows that are not bounded distance equivalent
to each other (Theorem \ref{thm:not-bde}). This implies that 
bounded distance equivalence of CPS is not preserved under local
indistinguishability, or more precisely: two CPS in the same
hull are not necessarily bounded distance equivalent to each other.

Because our results are dealing mainly with discrete point sets
in one dimension, most terminology is stated for this one-dimensional 
case only. Note that most objects mentioned in this paper 
(substitution sequences, tilings, CPS) have
higher dimensional analogues. Section \ref{sec:bde-cps} contains
some results for higher dimensions, namely, Lemma \ref{lem:hall},
Lemma \ref{lem:lattice_divide} and Theorem \ref{thm:win-bde}.

But mainly the objects of study in this paper are discrete point sets
on the line. One important example is the set $\Z$ of integers,
or more general: the set $a\Z = \{an \mid n \in \Z\}$ for some
$a \in \R \setminus \{0\}$. Such a set $a \Z$ is \emph{periodic},
that is, there is a non-zero $t \in \R$ such that $t+ a \Z = a\Z$ (namely,
all $t \in a\Z$ will do). We are more interested in studying \emph{nonperiodic}
discrete point sets $\Lambda \subset \R$, that is, sets $\Lambda$
such that $\Lambda + t = \Lambda$ implies $t=0$. 
Two interesting classes of nonperiodic discrete point sets are
substitution sequences and CPS. They are 
introduced in Section \ref{sec:subst-cps}.

The main question motivating this paper is ``when are two given discrete
point sets on the line bounded distance equivalent to each other?''
Roughly speaking, this means that there is a perfect matching
between the point sets such that the distance between any two 
matched points is uniformly bounded. More precisely:
\begin{defi}
Two discrete point sets $\Lambda, \Lambda' \in \R^d$ are called 
\emph{bounded distance equivalent} ($\Lambda \bd \Lambda'$) if there 
is some $c>0$ and some bijection $\phi \colon  \Lambda \to \Lambda'$ 
such that $|x - \phi(x)| < c$ for all $x \in \Lambda$. 
\end{defi}
\begin{rem} It is straight-forward to see that bounded distance 
equivalence is indeed an equivalence relation. The minmal $c$ can even
be regarded as the distance of two Delone sets $\Lambda \bd \Lambda'$,
which can easily be checked to define a metric on each equivalence class.
\end{rem}
A cut-and-project set (CPS) is a discrete point set on the line that is
defined by projecting the points of some point lattice in $\R^{d+1}$
that are contained in some strip $W \times \R$ to the line $\R$, where 
$W$ is some (usually compact)  subset of $\R^d$. Compare Figure 
\ref{fig:cps-fib} for an example with $d=1$, see Section \ref{sec:subst-cps}
for a precise explanation. It turns out that the question whether a 
given (nonperiodic) CPS is bounded distance equivalent 
to some (periodic) point lattice is strongly related to the question 
whether $W$ is a bounded remainder set. 
\begin{rem}
Throughout the paper $\mu$ denotes Lebesgue measure in the appropriate dimension.
\end{rem}
\begin{defi}
Let $\alpha \in \R^d$, $W \subset [0,1]^d$. $W$ is a \emph{bounded remainder set}
(BRS) with respect to $\alpha$, if for the \emph{discrepancy} 
\[ D_n(W,x) = \sum_{k=0}^{n-1} 1_W\bigl((x+k \alpha) \bmod 1\bigr) \; - n \mu(W) \]
holds: there is $c>0$ such that for all $n$ and for almost all $x \in [0,1[$ holds
$|D_n(W,x)| < c$. 
\end{defi} 
The parameter $\alpha$ in the definition above is some irrational number
(if $d=1$) respectively irrational vector (if $d>1$) which in our
setup depends on the point lattice defining the cut-an-project set under 
consideration. The $x$ in the definition above is negligible for
our purposes.

\section{Substitutions and cut-and-project sets} \label{sec:subst-cps}

A \emph{point lattice} (or shortly \emph{lattice}) in $\R^d$ is 
the integer span of $d$ linearly independent vectors. Hence
a lattice in $\R$ is a set $a \Z = \{n a \, | \, n \in \Z \}$, where 
$a \in \R\setminus \{0\}$.

The \emph{density} of a discrete point set $\Lambda$ in $\R$ is the
average number of points of $\Lambda$ per unit. It is defined 
as the limit $\lim\limits_{n \to \infty} \frac{1}{2n} | \Lambda \cap [-n,n]|$,
if it exists. (In fact, the definition of the density of a discrete point set
is much more subtle, compare for instance \cite[Section 1, p 16]{BG}. For our 
purposes this definition, sometimes called \emph{central density}, is sufficient.)

The density of a lattice in $\R^d$ is $\frac{1}{|\det(M)|}$, where $M$ is 
the matrix whose columns are the spanning vectors of the lattice. Hence 
the density of a lattice $L=\{ na \, | \, n \in \Z \}$ is $\frac{1}{|a|}$.

Let $\mathcal{A}_n = \{a_1, \ldots, a_n\}$ denote an alphabet of 
$n$ letters. Let $\mathcal{A}^{\ast}_n$ denote all finite words over 
$\mathcal{A}_n$.
A symbolic substitution is a map $\sigma: \mathcal{A}^{\ast}_n \to
\mathcal{A}^{\ast}_n$ such that for all $u,v \in
\mathcal{A}^{\ast}_n$ holds $\sigma(u)\sigma(v) = \sigma(uv)$. 
Here $uv$ denotes the concatenation of $u$ and $v$. Hence
a symbolic substitution is uniquely defined by $\sigma(a_1),
\ldots, \sigma(a_n)$. The \emph{substitution matrix} of
a substitution $\sigma$ is the matrix $M_{\sigma} =
(|\sigma(a_j)|_i)_{1 \le i,j \le n}$, where $|\sigma(a_j)|_i$
denotes the number of letters $a_i$ in $\sigma(a_j)$.
A substitution $\sigma$ is called \emph{primitive}, if there
is $k$ such that $M_{\sigma}^k$ contains positive entries only. 
By the Perron-Frobenius theorem \cite{P}, the matrix $M_{\sigma}$ of a 
primitive substitution $\sigma$ has
a unique real eigenvalue $\lambda>0$, such that all
other eigenvalues of $M_{\sigma}$ are strictly smaller in absolute value.
Furthermore, $M_{\sigma}$ has an eigenvector $v$ for $\lambda$ 
with positive entries only. In the following, we refer to 
$\lambda$ as Perron-Frobenius eigenvalue of $M_{\sigma}$
and to $v$ as Perron-Frobenius eigenvector of $M_{\sigma}$.
The \emph{symbolic hull} of $\sigma$ is
\[ X_{\sigma} := \{ u \in \mathcal{A}_n^{\Z} \, | \, \mbox{each finite subword 
of }u\mbox{ is contained in some } \sigma^k(a_i) \}. \]
\begin{rem} In general, the hull $X_u$ of any symbolic sequence 
$u = (\ldots, u_{-1}, u_0, u_1,\ldots ) \in \mathcal{A}^{\Z}$ is the closure 
of $\{ S^k u \mid k \in \Z \}$ with respect to the metric 
\[ d(u,u):=0, \quad d(u,v) := 2^{-\min\{ |i| \mid u_i \ne v_i \}} \quad \mbox{ where }
v = ( \ldots , v_{-1}, v_0, v_1, \ldots ) \ne u \]  
Here $S$ denotes the shift operator $S (\ldots, u_{-1}, u_0, u_1,\ldots )
= (\ldots, v_{-1}, v_0, v_1, \ldots)$ with $v_i = u_{i+1}$. 
If $\sigma$ is a primitive substitution then $X_{\sigma} = X_u$
for all $u \in X_{\sigma}$ \cite[Remark 4.2]{BG}. 
\end{rem}

A tile substitution in $\R$ is a collection of intervals $[0,\ell_1],
\ldots, [0,\ell_n]$, an \emph{inflation factor} $\lambda$, and 
a rule how to partition each $\lambda [0, \ell_i]$ into translates
of the original intervals $[0,\ell_1], \ldots, [0,\ell_n]$. Translates of
the $[0,\ell_1], \ldots, [0,\ell_n]$ are called \emph{tiles}. 
A tile substitution can be repeatedly applied to a single tile, 
covering larger and larger portions of the line. The iterate 
$\sigma^k([0,\ell_i])$ is called \emph{(level-$k$) supertile} (for 
convenience, we will sometimes use $a_i$ for the tile $[0,\ell_i]$
below).

Trivially, any tile substitution can be transformed into a symbolic substitution by
identifying translates of intervals $[0, \ell_i]$ with letters $a_i$.
Because of the following folklore result (see e.g. \cite[Section 4.1]{BG})
each primitive symbolic substitution can be turned into a tile
substitution, too. 
\begin{thm} \label{thm:natural}
Let $\sigma$ be a primitive symbolic substitution. Let $\lambda$
be the Perron-Frobenius eigenvalue of $M_{\sigma}$,
and let $w=(\ell_1, \ldots, \ell_n)$ be a left 
eigenvector of $M$ for $\lambda$. Identifying $a_i$ with
$[0, \ell_i]$ yields a tile substitution $\sigma$ with inflation
factor $\lambda$. 
\end{thm}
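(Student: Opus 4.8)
The plan is to turn the combinatorial data of $\sigma$ into a geometric partition rule and then to check that the rule is consistent; the only nontrivial point will be that the lengths match up. First I would fix, for each letter $a_i$, the word $\sigma(a_i) = a_{j_1} a_{j_2} \cdots a_{j_k}$ and define the partition of the inflated tile $\lambda[0,\ell_i] = [0, \lambda \ell_i]$ by placing translated copies of the prototiles consecutively from left to right: the first tile is $[0, \ell_{j_1}]$, the second is $[\ell_{j_1}, \ell_{j_1} + \ell_{j_2}]$, and in general the $m$-th tile is the copy of $[0, \ell_{j_m}]$ translated by $\ell_{j_1} + \cdots + \ell_{j_{m-1}}$. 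By construction these tiles have pairwise disjoint interiors, and their union is the interval $[0, L_i]$ with $L_i := \sum_{m=1}^k \ell_{j_m}$. Hence the rule genuinely partitions $\lambda[0,\ell_i]$ exactly when $L_i = \lambda \ell_i$ for every $i$, and this single identity is all that has to be proved.

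To verify it, I would group the letters of $\sigma(a_i)$ by type and rewrite $L_i = \sum_{j=1}^n |\sigma(a_i)|_j\, \ell_j$, where $|\sigma(a_i)|_j$ counts the occurrences of $a_j$ in $\sigma(a_i)$. Under the convention $M_\sigma = (|\sigma(a_j)|_i)_{1 \le i,j \le n}$ one has $(M_\sigma)_{j,i} = |\sigma(a_i)|_j$, so $L_i = \sum_{j} (M_\sigma)_{j,i}\, \ell_j = (w M_\sigma)_i$. Since $w = (\ell_1, \ldots, \ell_n)$ is a left eigenvector of $M_\sigma$ for $\lambda$, we have $w M_\sigma = \lambda w$, and therefore $L_i = \lambda \ell_i$ for each $i$, as needed. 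Primitivity of $\sigma$ enters through the Perron--Frobenius theorem, which guarantees $\lambda > 0$ and allows $w$ to be chosen with strictly positive entries; this ensures that each $[0, \ell_i]$ is a nondegenerate interval and that the inflation factor is positive, so the resulting object is an honest tile substitution.

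I do not expect a real obstacle: once the partition rule is written down explicitly, the entire statement collapses to the equality $L_i = \lambda \ell_i$. The one place demanding care is the bookkeeping --- one must keep the indexing of $M_\sigma$ and the choice of a \emph{left} (rather than right) eigenvector aligned, so that the $\ell_j$-weighted column sums of $M_\sigma$ reproduce $\lambda \ell_i$. This is not a mere cosmetic choice: using the right Perron--Frobenius eigenvector instead would produce the identity governing the relative \emph{frequencies} of the tiles rather than their \emph{lengths}, which would not close the argument. For this reason I would state the convention for $M_\sigma$ and the eigenvector side explicitly at the outset, to keep the length computation unambiguous.
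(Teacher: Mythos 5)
Your proof is correct: the length identity $L_i=\sum_j(M_\sigma)_{j,i}\ell_j=(wM_\sigma)_i=\lambda\ell_i$ is exactly what makes the consecutive placement of tiles a genuine partition of $\lambda[0,\ell_i]$, and your appeal to Perron--Frobenius for positivity of $\lambda$ and of the entries of $w$ is the right way to use primitivity. The paper itself offers no proof --- it cites the statement as folklore from \cite{BG} --- and your argument is precisely the standard one that reference has in mind, with the left-versus-right eigenvector bookkeeping handled correctly.
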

The $\ell_i$ in the result above are called \emph{natural
tile lengths}. The choice is unique up to an overall scaling
of the $\ell_i$. Any symbolic infinite sequence in the hull
of some symbolic substitution can be turned into a tiling of the line
by identifying the symbol $a_i$ with a translate of the
tile $[0,\ell_i]$. For the sake of convenience we want to identify
the symbol $u_0=a_i$ in $u = \cdots u_{-1} u_0 u_1 \cdots$ with
the tile $[0, \ell_i]$ with left endpoint 0. 
Since we will consider discrete point sets in the sequel, we
consider the point sets $\{ \ldots, x_{-1}, x_0=0, x_1, \ldots \}$
consisting of the left endpoints of the tiles in the tiling rather than
the tilings itself. The set of all point sets arising from some 
primitive substitution (symbolic or tile) in this way is the \emph{geometric
hull} and is denoted by $\X_{\sigma}$. Theorem \ref{thm:natural} has the following 
consequences for these point sets $\Lambda \in \X_{\sigma}$ \cite[Section 4.4]{BG}. 

\begin{thm} \label{thm:eigv-dens}
Let $\sigma$ be a primitive substitution, and let $u \in \X_{\sigma}$.
Let $\lambda$ be the Perron-Frobenius eigenvalue of $M_{\sigma}$,
and let $v=(v_1, \ldots, v_n)^T$ the right normalized eigenvector of
$M$ for $\lambda$. The relative frequency of symbol $a_i$ in $u$ 
\[ \freq(a_i) = \lim_{n \to \infty} \frac{ |\{ u_k \, | \, u_k=a_i, \, -n < k \le n \}|}
{2n} \]
exists and equals $v_i$. The density of each point set $\Lambda = 
\{ \ldots, x_{-1}, x_0=0, x_1, \ldots \} \in \X_{\sigma}$ using natural 
tile lengths $w=(\ell_1, \ldots, \ell_n)$ exists and equals 
\[ \dens(\Lambda) =  \Big( \sum_{i=1}^n v_i \ell_i \Big)^{-1} = 
\frac{1}{w \cdot v}. \]
\end{thm}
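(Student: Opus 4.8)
The plan is to prove both claims—the frequency statement and the density formula—by exploiting the Perron-Frobenius structure of $M_\sigma$ together with the self-similar combinatorics encoded by the substitution. The two claims are closely tied: once the letter frequencies $\freq(a_i) = v_i$ are established, the density formula follows by a short averaging argument, so the bulk of the work lies in the frequency statement. Throughout I would use the standard fact that applying $\sigma^k$ to a single letter produces a word whose \emph{letter-count vector} (the vector recording how many times each $a_i$ appears) is exactly $M_\sigma^k$ applied to the standard basis vector $e_j$; this is immediate from the definition of the substitution matrix and the multiplicativity $\sigma(uv) = \sigma(u)\sigma(v)$.

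First I would establish the frequencies. For a primitive substitution, the Perron–Frobenius theorem gives $\lambda > 0$ dominant and a positive right eigenvector $v$, which I normalize so that $\sum_i v_i = 1$. The key quantitative input is the convergence of normalized iterates: for any starting letter $a_j$, the vector $\lambda^{-k} M_\sigma^k e_j$ converges, as $k \to \infty$, to a multiple of $v$, because all non-Perron eigenvalues are strictly smaller in modulus. Since the total length of the word $\sigma^k(a_j)$ equals the sum of the entries of $M_\sigma^k e_j$, dividing the count of $a_i$'s by the total count yields a ratio converging to $v_i / \sum_\ell v_\ell = v_i$. The step requiring care is passing from this statement about supertiles $\sigma^k(a_j)$ to the relative frequency along an \emph{arbitrary} bi-infinite sequence $u \in \X_\sigma$, as in the displayed limit over the window $-n < k \le n$. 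Here I would use primitivity again: every sufficiently long subword of $u$ is covered by level-$k$ supertiles, and primitivity guarantees that every letter appears in every supertile of bounded level, so the empirical frequencies in growing windows are squeezed between the supertile frequencies and hence converge to the same limit $v_i$ independently of $u$ and of the choice of $x_0$.

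\emph{The main obstacle} I expect is exactly this uniformity: making rigorous that the frequency along a concrete sequence equals the limiting supertile frequency, uniformly in the window position, without appealing to the ergodic-theoretic machinery (unique ergodicity of the substitution subshift) that would make it a one-line consequence. The clean way around this is a renormalization/nesting argument—decompose the window $[-n, n]$ (in the index $k$) into a bounded number of maximal supertiles of a common high level plus short boundary remnants, bound the contribution of the remnants by a term that is $o(n)$ because supertile lengths grow like $\lambda^k$, and let $k$ grow slowly with $n$. This is where the estimates have to be controlled carefully, though the argument is standard for primitive substitutions.

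Finally, with $\freq(a_i) = v_i$ in hand, the density formula follows quickly. In a window of the tiling containing $N$ tiles, the total length covered is $\sum_i (\text{number of } a_i) \cdot \ell_i \approx N \sum_i v_i \ell_i = N\,(w \cdot v)$, so the number of left endpoints per unit length—which is the density by definition—tends to $1/(w \cdot v)$. Concretely, I would count points of $\Lambda$ in a symmetric real interval $[-R, R]$, relate $R$ to the number $N$ of tiles via the length accounting just described, and take $R \to \infty$; the frequency convergence controls the fluctuation of the length sum, and the $o(N)$ boundary term from the partially covered end tiles is negligible. This yields $\dens(\Lambda) = \big(\sum_{i=1}^n v_i \ell_i\big)^{-1} = \frac{1}{w \cdot v}$ as claimed.
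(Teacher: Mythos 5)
The paper does not actually prove Theorem \ref{thm:eigv-dens}: it is quoted as a standard consequence of Theorem \ref{thm:natural} with a citation to \cite[Section 4.4]{BG}, so there is no in-paper argument to compare against line by line. Your proposal reconstructs exactly the standard proof that the citation points to: letter counts of $\sigma^k(a_j)$ are $M_\sigma^k e_j$, Perron--Frobenius gives $\lambda^{-k}M_\sigma^k e_j \to c_j v$ with $c_j>0$, hence the letter-frequency vector of every level-$k$ supertile converges to $v$ uniformly in $j$, and the passage to an arbitrary $u\in\X_\sigma$ is the usual renormalization step (this is unique ergodicity of the primitive substitution subshift in disguise). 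Two small points to tighten. First, your phrase ``a bounded number of maximal supertiles of a common high level'' is not what you want: a window of length $n$ contains on the order of $n/\lambda^k$ level-$k$ supertiles; what is bounded (by two supertiles, hence $O(\lambda^k)=o(n)$ for fixed $k$) is only the boundary remnant, and the rest of your sentence shows you know this. Second, the existence of a level-$k$ supertile decomposition of an arbitrary $u\in\X_\sigma$ deserves a word: either invoke surjectivity of $\sigma$ on the hull, or, more cheaply, use the definition of $X_\sigma$ directly --- the finite window $u_{-n+1}\cdots u_n$ occurs inside some $\sigma^m(a_j)$, which manifestly decomposes into level-$k$ supertiles for $k<m$, and that is all the covering you need. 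With those repairs the frequency claim is complete, and your final averaging step for $\dens(\Lambda)=1/(w\cdot v)$ is correct as written.
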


\begin{rem}
By the normalized vector $v$ we always mean that the sum of its 
coordinates equals $1$. In particular the theorem implies that the sum of 
relative frequencies of all symbols is $1$, as it ought to be.
\end{rem}

\begin{exam} \label{ex:fib1}
The Fibonacci substitution $\sigma$ is given by $a \mapsto ab$,
$b \mapsto a$. It produces a bi-infinite sequence by repeatedly applying
it to the pair of letters $a|a$: $\sigma(a|a)=ab|ab$, $\sigma^2_f(a|a)=
aba|aba$, $\sigma^3_f(a|a)=abaab|abaab$, $\sigma^4_f(a|a)=
abaababa|abaababa$,  
$\ldots$. Its substitution matrix is $M_{\sigma} = \big( \begin{smallmatrix}
1 & 1\\ 1 & 0 \end{smallmatrix} \big)$ with Perron-Frobenius eigenvalue 
$\lambda=\frac{1}{2}(1+\sqrt{5})=:\tau$ and second eigenvalue
$\frac{1}{2}(1-\sqrt{5})$. The normalized eigenvector of $M_{\sigma}$
for $\lambda$ is $(\frac{\tau}{1+\tau},\frac{1}{1+\tau})^T$,
hence the relative frequency of $a$ is $\frac{\tau}{1+\tau}$ and
the relative frequency of $b$ is $\frac{1}{1+\tau}$.

The sequence can be transformed into a discrete point set in 
$\R$ by assigning an interval of length $\tau$ to $a$
and an interval of length 1 to $b$. We then consider the endpoints
of the intervals as our point set $\Lambda_F$. Then the density
of $\Lambda_F$ is $\Big( ( \tau, 1) \cdot (\frac{\tau}{1+\tau},\frac{1}{1+\tau})^T
\Big)^{-1} = \frac{1+\tau}{2+\tau}$.
\end{exam}
\begin{defi} \label{def:cps}
A \emph{cut-and-project set} (CPS, aka \emph{model set}) 
$\Lambda$ is given by a collection of maps and spaces:
\[ \begin{array}{ccccc}
G=\R & \stackrel{\pi_1}{\longleftarrow} & \R \times \R^d & 
\stackrel{\pi_2}{\longrightarrow} & H=\R^d\\
\cup & & \cup & & \cup\\
\Lambda & & \Gamma & & W
\end{array} \]
where $G$ and $H$ are locally compact abelian groups, 
$\Gamma$ is a lattice (i.e., a discrete cocompact subgroup)
in $G \times H$, $W$ is a relatively compact 
set in $H$, and $\pi_1$ and $\pi_2$ are projections to $G$, 
respectively to $H$, such that 
$\pi_1|_\Gamma$ is one-to-one, and $\pi_2(\Gamma)$ is
dense in $W$. Then 
\[ \Lambda = \{ \pi_1(x) \, | \, x \in \Gamma, \, \pi_2(x) \in W \} \]
is a CPS. 
\end{defi}
\begin{rem}
For our purposes it does not really matter whether if we replace
``lattice'' by ``translate of a lattice'' in the definition above: 
translating the lattice is equivalent to translating the window
--- and the CPS $\Lambda$, if needed --- by some small amount.
\end{rem}
Throughout the paper $G$ usually equals $\R$,
and $H$ always equals $\R^d$ for some $d \ge 1$. The following 
result is standard, see for instance \cite[Section 7.1, 7.2]{BG}. 

\begin{lem} \label{lem:denscps}
The density of a CPS $\Lambda$ is $\frac{\mu(W)}{|\det(M_{\Gamma})|}$,
where $M_{\Gamma}$ is the matrix whose columns are the spanning 
vectors of the lattice $\Gamma$. 
\end{lem}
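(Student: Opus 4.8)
The plan is to reduce the density computation to a lattice-point count inside a growing cylinder and then to invoke equidistribution. First I would use the injectivity of $\pi_1|_\Gamma$ from Definition \ref{def:cps}: every point of $\Lambda$ lying in a given interval $[-n,n]$ is the image under $\pi_1$ of exactly one lattice point $x \in \Gamma$ with $\pi_2(x) \in W$, and conversely each such lattice point contributes exactly one point of $\Lambda \cap [-n,n]$. This gives the identity
\[ |\Lambda \cap [-n,n]| = |\Gamma \cap C_n|, \qquad C_n := [-n,n] \times W \subset \R \times \R^d . \]
Since $W$ is relatively compact and the interval is bounded, the cylinder $C_n$ is bounded, so the count is finite even though $\pi_1(\Gamma)$ itself is dense in $\R$.

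Next I would relate this count to the volume of $C_n$. The cylinder has $(d+1)$-dimensional Lebesgue measure $\mu(C_n) = 2n\,\mu(W)$, and the lattice $\Gamma = M_{\Gamma}\Z^{d+1}$ has point density $1/|\det M_{\Gamma}|$. If one may assert $|\Gamma \cap C_n| \sim \mu(C_n)/|\det M_{\Gamma}|$ as $n \to \infty$, then dividing by $2n$ and passing to the limit yields
\[ \dens(\Lambda) = \lim_{n\to\infty} \frac{1}{2n}\,|\Lambda \cap [-n,n]| = \frac{\mu(W)}{|\det M_{\Gamma}|}, \]
which is the claimed formula.

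The main obstacle is justifying this asymptotic, and the point is that it does \emph{not} follow from a naive boundary estimate. The cylinder $C_n$ grows only in the physical direction, so its lateral boundary $[-n,n]\times \partial W$ has surface measure growing linearly in $n$, comparable to the volume itself; hence the surface-to-volume ratio does not vanish and the usual counting argument for balls or boxes does not apply directly. The resolution is equidistribution. I would consider the unit-speed linear flow on the torus $\mathbb{T}^{d+1} = \R^{d+1}/\Gamma$ in the physical direction; the density hypothesis on $\pi_2(\Gamma)$ makes this flow minimal and uniquely ergodic, so by Weyl's theorem the internal coordinates $\pi_2(x)$ of the lattice points with $\pi_1(x) \in [-n,n]$ equidistribute with respect to normalized Lebesgue measure on a fundamental domain of the internal lattice. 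Applying the ergodic average to the indicator $1_W$ is legitimate because $\partial W$ has measure zero (so $1_W$ is Riemann integrable on the relevant cross-section), and it shows that the number of lattice points in the slab grows at rate $\mu(W)/|\det M_{\Gamma}|$ per unit length, giving exactly the stated constant. This equidistribution estimate is the content of \cite[Section 7.1, 7.2]{BG}, which I would cite for the detailed argument.
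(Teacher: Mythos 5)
The paper does not actually prove this lemma: it is stated as a standard fact with a pointer to \cite[Sections 7.1, 7.2]{BG}, so there is no in-paper argument to compare against. Your outline is a correct reconstruction of the standard proof from that reference: the bijection $|\Lambda \cap [-n,n]| = |\Gamma \cap ([-n,n]\times W)|$ via injectivity of $\pi_1|_\Gamma$ is right, and you correctly identify the one genuine subtlety, namely that the lateral boundary $[-n,n]\times\partial W$ has measure comparable to the volume of the cylinder, so a naive Gauss-type counting estimate fails and one must invoke unique ergodicity of the translation flow on $\R^{d+1}/\Gamma$ (guaranteed by density of $\pi_2(\Gamma)$) together with Riemann measurability of $W$. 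The only imprecision is the sentence asserting that ``the internal coordinates $\pi_2(x)$ of the lattice points with $\pi_1(x)\in[-n,n]$ equidistribute on a fundamental domain of the internal lattice'': for a general CPS there is no internal lattice ($\pi_2(\Gamma)$ is dense in $\R^d$) and the set of lattice points with $\pi_1(x)\in[-n,n]$ is infinite; that phrasing only makes literal sense after reducing to the canonical form of Lemma \ref{lem:bde-cqcps}. The clean version of the step you want is to apply unique ergodicity to an $\epsilon$-thickening $([0,\epsilon)\times(-W))+\Gamma$ of the codimension-one section of the torus, so that the hitting count over $[0,T]$ equals $\epsilon^{-1}$ times the sojourn time, which converges to $T\mu(W)/|\det M_\Gamma|$ after dividing by $\epsilon$; with that substitution your argument is complete.
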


\begin{figure}
\includegraphics[width=.9\textwidth]{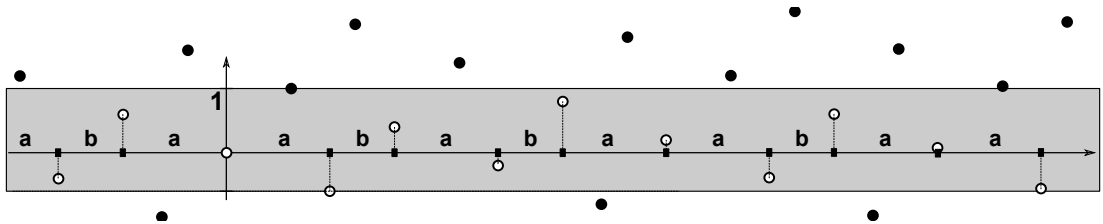} 
\caption{An illustration of the cut-and-project scheme for the
Fibonacci sequence. The symbolic Fibonacci sequence is indicated
by letters a and b. The vertex set of the resulting tiling are the
black marks on the $x$-axis. They are projections of all the
lattice points contained in the gray strip (white round points).
The gray strip is $\R \times [\frac{-1}{\tau},1[$. \label{fig:cps-fib}}
\end{figure}
\textbf{Notation:} In the sequel we will take advantage of the following
convention: Similar to a point set arising from a substitution, a CPS 
$\Lambda \subset \R$ can be written as an increasing sequence $\{ \ldots, x_{-1},
x_0=0, x_1, x_2, \ldots \}$ with $x_i<x_j$ for $i<j$, and where 0 is labelled
$x_0$. 

\begin{exam} \label{ex:fib2} (Example \ref{ex:fib1} continued)
A Fibonacci point set $\Lambda_F$, as constructed by the substitution above,
can as well be generated as a CPS. This CPS
has $d=1$, $W=[-\frac{1}{\tau}, 1[$, lattice $\Gamma = \langle \big(
\begin{smallmatrix} 1\\ 1 \end{smallmatrix} \big), \big( 
\begin{smallmatrix} \tau \\ \frac{-1}{\tau} \end{smallmatrix} \big) \rangle_{\Z}$, 
and $\pi_1$ and $\pi_2$ are orthogonal projections; compare Figure 
\ref{fig:cps-fib}. Consequently the density of $\Lambda_F$ is 
$\frac{1+\frac{1}{\tau}}{\tau + \frac{1}{\tau}} = \frac{\tau+1}{\tau+ 2}$.
\end{exam}

\section{Substitution sequences in bounded distance to a lattice}

\begin{defi}
Let $\mathcal{A}=\{a_1,\ldots, a_m\}$ and let $\sigma$ be a primitive symbolic 
substitution over $\mathcal{A}$. The substitution is called a 
{\it Pisot substitution} 
if the matrix $M_\sigma$ of the substitution has one (simple) positive 
real eigenvalue $\lambda>1$, and all other eigenvalues (real or complex) 
have absolute values less than $1$.
\end{defi}
Note that sometimes a Pisot substitution is defined to obey a stronger condition,
namely, that in addition to the definition above all eigenvalues 
different from $\lambda$ are non-zero, see for instance 
\cite[1.2.5]{Fogg}. Since we are dealing with integer matrices this condition
implies the characteristic polynomial of $M_{\sigma}$ being irreducible.
Theorem \ref{thm:pisot-bde} below holds for the more
general case when some eigenvalues are zero, hence we stick to the more general
definition here.

The goal of this section is to give an elementary proof that the set of vertices of a one-dimensional primitive Pisot substitution is bounded distance equivalent to some lattice. This was done before, for instance by Holton and Zamboni \cite[Theorem A.2]{HoZ}. (Probably this result was already known to Rauzy, but we are not aware of any reference). A similar result holds for Pisot substitution tilings in higher dimension, shown by Solomon \cite{Sol}. The proof here uses much simpler techniques than the ones by Holton and Zamboni, and by Solomon. Although we need to require additionally that the substitution matrix is diagonalizable, we gain that our result also includes the case of eigenvalue(s) zero.

We proceed to show that any $\Lambda' \in \X_{\sigma}$ is bounded distance
equivalent to $\dens^{-1}(\Lambda') \Z$ by showing this for some fixed point
of $\sigma$. That means we choose $\Lambda$ such that for the biinfinite
symbolic sequence $u$ corresponding to $\Lambda$ holds $\sigma(u)=u$.
Such a $u$ can always be found by the pigeonhole principle, possibly by 
using some power $\sigma^k$ of $\sigma$ instead of $\sigma$ itself. 
Using the following result it suffices to show the claim for $\Lambda$. 
Since it holds in higher dimensions as well we state it for the general case.

\begin{thm}\label{thm:hull-lattice}
Let $\Lambda$ be a discrete point set in $\R^d$ generated by some
primitive substitution, or some CPS, such that $\Lambda \bd a \Z^d$ 
for some $a$. Then for any $\Lambda'$ in the hull of $\Lambda$ holds:
$\Lambda' \bd a \Z^d$.
\end{thm}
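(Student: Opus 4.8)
The plan is to reduce the statement to two facts: first, that the entire translation orbit of $\Lambda$ is bounded distance equivalent to $a\Z^d$ with one common constant, and second, that this property survives passage to local limits. Since $\Lambda$ comes from a primitive substitution or a CPS, its hull is the closure of the translation orbit $\{\Lambda - t \mid t \in \R^d\}$ in the local topology, so every $\Lambda'$ in the hull is a local limit $\Lambda' = \lim_{k}(\Lambda - t_k)$ of translates of $\Lambda$; this is the only place where the substitution/CPS structure enters.

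For the first fact I would argue as follows. For any $t \in \R^d$ the translated lattice $a\Z^d - t$ equals $a\Z^d - v$, where $v$ is the representative of $t$ modulo $a\Z^d$ with $|v| \le a\sqrt{d}$, and translation by $v$ is a bijection $a\Z^d - t \to a\Z^d$ of displacement at most $a\sqrt{d}$; hence $a\Z^d - t \bd a\Z^d$ uniformly in $t$. Shifting the given bijection $\varphi\colon \Lambda \to a\Z^d$ of displacement $<c$ to a bijection $\Lambda - t \to a\Z^d - t$ (which keeps the same displacement), composing it with this translation bijection, and using that $\bd$ is an equivalence relation (see the Remark following the definition), I obtain for every $t$ a bijection $\psi_t \colon \Lambda - t \to a\Z^d$ with displacement $< c' := c + a\sqrt{d}$, where $c'$ does not depend on $t$.

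The heart of the argument is the second fact: a uniform bound on the matching distance is a closed condition under local convergence. Writing $\Lambda' = \lim_k(\Lambda - t_k)$ and $\psi_k := \psi_{t_k}$, on each ball $B_R$ the patch $(\Lambda - t_k)\cap B_R$ stabilizes to $\Lambda'\cap B_R$; since $\Lambda$ is uniformly discrete and every point is moved by less than $c'$, the restriction of $\psi_k$ to the (finitely many) points in $B_R$ takes only finitely many values, all lying inside $B_{R+c'}$. A diagonal extraction over $R=1,2,\dots$ then yields a subsequence along which the matchings stabilize on every ball and define a map $\psi\colon \Lambda' \to a\Z^d$ with $|x-\psi(x)|\le c'$ for all $x \in \Lambda'$.

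It remains to verify that $\psi$ is a bijection. Injectivity is immediate: two distinct points of $\Lambda'$ mapped by $\psi$ to the same lattice point would, for large $k$ in the subsequence, force $\psi_k$ to identify two distinct points, contradicting injectivity of $\psi_k$. Surjectivity is the main obstacle, and it is exactly where the uniform bound $c'$ is needed: any preimage of a fixed $p \in a\Z^d$ under $\psi_k$ lies in $B(p,c')$, so once $\Lambda - t_k$ agrees with $\Lambda'$ on $B(p,c'+1)$ the point that $\psi_k$ sends to $p$ is a genuine point $x$ of $\Lambda'$ with $\psi(x)=p$. Hence $\psi$ is a bijection of displacement at most $c'$, giving $\Lambda' \bd a\Z^d$. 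The delicate point to handle carefully is that the stabilization of patches and of the finite matchings must be arranged simultaneously, which the diagonal extraction secures; the uniform displacement bound $c'$ is what makes both the finiteness needed for the extraction and the surjectivity in the limit work.
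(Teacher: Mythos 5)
Your proof is correct, but it takes a genuinely different route from the paper's. The paper's proof (itself only a sketch) invokes Laczkovich's criterion: $\Lambda \bd a\Z^d$ if and only if the number of points of $\Lambda$ in any finite union $H$ of unit cubes differs from the appropriately scaled volume of $H$ by at most a constant times $\lambda_{d-1}(\partial H)$; since this is a purely local, pattern-wise counting condition and every pattern of $\Lambda'$ occurs (up to translation) in $\Lambda$, the criterion transfers to every element of the hull. You instead work directly with the matchings: you first make the displacement constant uniform over the whole translation orbit (at the cost of $a\sqrt d$), and then show by diagonal extraction that a family of matchings with a uniform displacement bound passes to local limits, the uniform bound doing double duty for the finiteness needed in the extraction and for surjectivity of the limit map. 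Your argument is more elementary and self-contained --- it avoids the nontrivial Laczkovich theorem entirely --- and it isolates the actual mechanism: bounded distance equivalence to a target that is uniformly $\bd$-equivalent to all of its translates is a closed condition under local convergence of translates of $\Lambda$. The one point to tidy up is that in the local topology the patches $(\Lambda-t_k)\cap B_R$ need not literally stabilize, but only agree with $\Lambda'\cap B_R$ up to translations of size $\epsilon_k\to 0$; for substitution point sets and CPS this is harmless (uniform discreteness and finite local complexity make the nearby-point correspondence unique for large $k$), and your $\epsilon$-bookkeeping absorbs it, but it deserves an explicit sentence. What the paper's route buys in exchange is brevity, and the fact that Laczkovich's condition is manifestly invariant under translation and local limits without any extraction argument.
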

\begin{proof}
We give only a sketch of the proof based on the criterion of Laczkovich 
\cite{Lac}. In plain words, this criterion says that given a set $\Lambda \subset \R^d$ 
then $\Lambda \bd a \Z^d$ if and only if the number of points of $\Lambda$ in any union of 
parallel unit cubes differs from the $d$-dimensional volume of that union by some constant 
(which depends on $M$ only) times the $(d-1)$-dimensional volume of the surface of 
that union. Spelled out it
says that for $d>1$ in $\R^d$ holds: $\Lambda \bd a^{-1/d} \Z^d$ if and only if
there is a positive constant $C>0$  such that $\big| |\Lambda \cap H| - 
a \lambda_d(H)\big| \le C \lambda_{d-1}(\partial H)$ holds whenever 
$H$ is a finite union of unit cubes.

Since $\Lambda \bd a \Z^d$ this condition holds for $\Lambda$. 
Since $\Lambda$ and $\Lambda'$ are in the same hull, any pattern in 
$\Lambda'$ can be found in $\Lambda$ and therefore the condition of 
Laczkovich holds for $\Lambda'$ as well.
\end{proof}

The argument in the sequel goes along the following lines: 
First we bound the distance between the endpoint of a $k$-level
supertile in $\Lambda$ and the corresponding point in 
$\dens^{-1}(\Lambda) \Z$ by some term $c \cdot c_{\sigma}^k$ 
with $c_{\sigma}<1$. Then we show that the error between any point in 
$\Lambda$ and its corresponding point in $\dens^{-1}(\Lambda) \Z$ 
is bounded by $C (c_{\sigma}^0 +  c_{\sigma}^1 + c_{\sigma}^2 + \cdots) + C'$
for some constants $C, C'$. Hence the distance is uniformly bounded. 

Let us consider some level-$k$ supertile $\sigma^k(a_i)$ of type $i$
with left endpoint 0. It consists of $n^i_k$ tiles, say, i.e. of
$n^i_k$ intervals, each having length in $\{\ell_1, \ldots, \ell_m\}$,
where $\{\ell_1, \ldots, \ell_m)$ is the left eigenvector of the substitution
matrix $M_{\sigma}$ for $\lambda$, as in Theorem \ref{thm:natural}.
In the sequel let $x^i_j$ denote the $j$th point in the set of endpoints
of these intervals. So the point set representing this supertile is 
$\{x^i_0=0, x^i_1, \ldots, x^i_{n^i_k} \}$. This supertile consists of 
$n^i_k=(1,\ldots,1)M_\sigma^k\mathbf{e}_i$ intervals of length 
$\ell_1,\ldots,\ell_m$ where $\mathbf{e}_i$ is the $i$th vector of the 
standard basis in $\mathbb{R}^m$. By Theorem \ref{thm:eigv-dens} for any 
$i \in \{1, \ldots, m\}$ 
the average length of intervals in the sequence $\{x_j^i\}_j$ equals 
the scalar product $\dens^{-1}(\Lambda)$ of the left and right 
$\lambda$-eigenvectors of the matrix $M_\sigma$ (the right eigenvector 
normalized). Hence we have by the definition of the density that the
average tile length approaches $\dens^{-1}(\Lambda)$. That is,
\begin{equation} \label{eq:lim-dens}
\lim\limits_{j \to \infty}\left|\frac{x^i_j}{j}-
\dens^{-1}(\Lambda)\right|=0.
\end{equation}
In the sequel we need only the fact that $\lim\limits_{k \to \infty}\left|
\frac{x^i_{n^i_k}}{n^i_k}-\dens^{-1}(\Lambda)\right|=0$.


\begin{lem}\label{lem:supertileerror}
Let the substitution matrix $M_{\sigma} \in \R^{m \times m}$ be diagonalizable
over $\C$. There is a number $c_\sigma\in[0,1[$ such that for every 
$i\in\{1, \ldots, m\}$ there is some positive constant $c_i$ such that
\[\left|x^i_{n^i_k}-n^i_k\cdot \dens^{-1}(\Lambda)\right|<c_ic_\sigma^k.\]
\end{lem}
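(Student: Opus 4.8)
The plan is to work entirely with the substitution matrix $M := M_\sigma$ and reduce the claimed estimate to the size of the subdominant eigenvalues. Write $w = (\ell_1, \ldots, \ell_m)$ for the row vector of natural tile lengths (the left $\lambda$-eigenvector of Theorem \ref{thm:natural}), let $v$ be the normalized right $\lambda$-eigenvector of Theorem \ref{thm:eigv-dens}, and set $\rho := \dens^{-1}(\Lambda) = w \cdot v$. First I would identify the two quantities in the lemma as bilinear expressions in $M^k$. The right endpoint $x^i_{n^i_k}$ is exactly the total length of the supertile $\sigma^k(a_i)$, which contains $(M^k)_{ji}$ tiles of length $\ell_j$; hence $x^i_{n^i_k} = w M^k \mathbf{e}_i$, and since $w M = \lambda w$ this is $\lambda^k \ell_i$. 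Likewise $n^i_k = \mathbf{1}^T M^k \mathbf{e}_i$ with $\mathbf{1} = (1, \ldots, 1)^T$, as already recorded in the text. Thus the error to be bounded is
\[ x^i_{n^i_k} - n^i_k\, \rho = \bigl(w - \rho\, \mathbf{1}^T\bigr) M^k \mathbf{e}_i =: g^T M^k \mathbf{e}_i, \]
where $g^T := w - \rho\, \mathbf{1}^T$ is a fixed row vector independent of $k$.

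The decisive point — and the real content of the lemma — is that $g^T$ annihilates the dominant eigendirection. Writing $r_1$ for a right $\lambda$-eigenvector and using $v = r_1 / (\mathbf{1}^T r_1)$, one computes $g^T r_1 = w r_1 - \rho\,\mathbf{1}^T r_1 = (\mathbf{1}^T r_1)(w\cdot v) - \rho\,(\mathbf{1}^T r_1) = 0$, because $w\cdot v = \rho$. In words: subtracting $\rho$ times the tile count from the supertile length cancels exactly the Perron--Frobenius growth, which is precisely why what remains decays. This is the step I expect to be the crux; it is short, but it is where the precise density normalization of Theorem \ref{thm:eigv-dens} enters in an essential way.

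With this in hand the bound is pure linear algebra. Since $M$ is diagonalizable over $\C$, fix a biorthogonal system of eigenvectors so that $M^k = \sum_{\ell=1}^m \lambda_\ell^k\, r_\ell\, l_\ell^T$ with $l_j^T r_\ell = \delta_{j\ell}$ and $\lambda_1 = \lambda$. Then
\[ g^T M^k \mathbf{e}_i = \sum_{\ell=1}^m \lambda_\ell^k\, (g^T r_\ell)\,(l_\ell)_i = \sum_{\ell=2}^m \lambda_\ell^k\, (g^T r_\ell)\,(l_\ell)_i, \]
the $\ell = 1$ term vanishing by the previous paragraph. The Pisot condition gives $|\lambda_\ell| < 1$ for $\ell \ge 2$, so I would pick any $c_\sigma \in {}]0,1[$ with $c_\sigma \ge \max_{2 \le \ell \le m} |\lambda_\ell|$ and set $c_i := 1 + \sum_{\ell=2}^m |g^T r_\ell|\,|(l_\ell)_i|$. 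The triangle inequality then yields $|x^i_{n^i_k} - n^i_k \rho| \le c_\sigma^k (c_i - 1) < c_i\, c_\sigma^k$, as claimed. Complex eigenvalues occur in conjugate pairs, keeping the sum real, and enter the bound only through their moduli, so they cause no difficulty; taking $c_\sigma$ strictly positive and the extra $+1$ in $c_i$ secure the strict inequality and cover the degenerate case in which every subdominant eigenvalue vanishes. Finally, diagonalizability is exactly what guarantees the clean spectral sum above: in the non-diagonalizable case one would pick up factors $k^j \lambda_\ell^k$, still decaying, but then $c_\sigma$ would have to be chosen strictly between $\max_{\ell \ge 2}|\lambda_\ell|$ and $1$.
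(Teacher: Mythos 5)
Your proposal is correct and follows essentially the same route as the paper: diagonalize $M_\sigma$, split $M_\sigma^k$ into the Perron term $\lambda^k$ plus terms governed by the subdominant eigenvalues, show that the coefficient of $\lambda^k$ in the error vanishes, and bound the remainder geometrically via $c_\sigma=\max_{\ell\ge 2}|\lambda_\ell|$. The only minor difference is that you establish the key cancellation $g^T r_1=0$ by the direct algebraic identity $\rho=w\cdot v$, whereas the paper deduces the same fact (its coefficient $b=0$) from the limit relation \eqref{eq:lim-dens}; both inputs ultimately come from Theorem \ref{thm:eigv-dens}.
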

\begin{proof}
Let's fix $i$. We will write down the left hand side using the 
substitution matrix. The number $x^i_{n^i_k}$ is the length of level-$k$ 
supertile of type $i$. After every substitution step the length of each 
basic tile multiplies by $\lambda$, thus $x^i_{n^i_k}=\lambda^k \ell_i$. 

We know that $n^i_k=(1,\ldots,1)M_\sigma^k\mathbf{e}_i$. Since the matrix 
$M_\sigma$ is diagonalizable, there is a matrix $C$ (possibly with complex 
entries) such that $M_\sigma=C\diag(\lambda,\lambda_2,\ldots,\lambda_m)C^{-1}$ 
where $\lambda_i$ are all algebraic conjugates of $\lambda$. Then 
\[M^k_\sigma=C\diag(\lambda^k,\lambda^k_2,\ldots,\lambda^k_m)C^{-1}=\lambda^k C\diag(1,0,\ldots,0)C^{-1}+B_k\]
for some matrix $B_k$ with entries represented by linear forms of 
$\lambda_2^k,\ldots,\lambda_m^k$.

Plugging this back in the formula for $n^i_k$ we get
\[x^i_{n^i_k}-n^i_k\cdot \dens^{-1}(\Lambda)=b \cdot \lambda^k+(1,\ldots,1)B_k\mathbf{e}_i \cdot \dens^{-1}(\Lambda)\]
for some number $b$. Note, that $\lambda$ is the only eigenvalue of $M_\sigma$ with absolute value greater than $1$, hence $\lim\limits_{k\rightarrow\infty}B_k=0$ and, 
using Equation \eqref{eq:lim-dens}:
 \[0=\lim\limits_{k\to \infty}\left|\frac{x^i_{n^i_k}}{n^i_k}-\dens^{-1}(\Lambda)\right|=
 b \cdot \lim_{k \to \infty} \frac{\lambda^k}{n^i_k} = b \cdot \dens^{-1}(\Lambda).\]
Hence $b=0$, and the difference $\left|x^i_{n^i_k}-n^i_k\cdot \dens^{-1}(\Lambda)\right|$ can be written as a fixed linear form of $k$th powers of algebraic conjugates of $\lambda$ which have absolute values less than $1$.

Now we can choose $c_\sigma=\max(|\lambda_2|,\ldots,|\lambda_m|)<1$ and the statement of the lemma is obvious.
\end{proof}

Let $\epsilon^i_k$ be the maximum discrepancy (or error) within
 the level-$k$ supertile of type $i$, that is 
\[\epsilon^i_k=\max\limits_{0<j\leq n^i_k}|x^i_j-j\cdot \dens^{-1}(\Lambda)|.\]
Our next step is to establish recurrent inequalities for the errors $\epsilon^i_k$.

\begin{lem}\label{lem:error}
For every $k>1$ we have 
\[\epsilon^i_k\leq \max\limits_{1\leq j\leq m}\epsilon^j_{k-1}+C\cdot c_\sigma^{k-1}\]
where $C$ is a constant that does not depend on $k$ or $i$.
\end{lem}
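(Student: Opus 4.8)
The plan is to decompose a level-$k$ supertile of type $i$ into its constituent level-$(k-1)$ supertiles and track how the approximation error accumulates across this decomposition. By the substitution rule, the word $\sigma^k(a_i) = \sigma^{k-1}(\sigma(a_i))$, so the level-$k$ supertile of type $i$ is the concatenation of level-$(k-1)$ supertiles whose types are exactly the letters appearing in $\sigma(a_i)$. Writing $\sigma(a_i) = a_{j_1} a_{j_2} \cdots a_{j_r}$ (where $r = |\sigma(a_i)|$ is bounded by a constant independent of $k$), the point set of the level-$k$ supertile is obtained by laying down the level-$(k-1)$ supertiles of types $a_{j_1}, \ldots, a_{j_r}$ end to end.

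First I would fix an arbitrary index $j$ with $0 < j \le n^i_k$ and locate the point $x^i_j$ inside the $p$th block, i.e. inside the level-$(k-1)$ supertile of type $a_{j_p}$ that is translated to start at some offset $s_p$ (the left endpoint of that block). The key identity is that $s_p$ equals the sum of the lengths $x^{j_1}_{n^{j_1}_{k-1}} + \cdots + x^{j_{p-1}}_{n^{j_{p-1}}_{k-1}}$ of the preceding level-$(k-1)$ supertiles. I would then split the error $|x^i_j - j \cdot \dens^{-1}(\Lambda)|$ using the triangle inequality into two contributions: the error committed \emph{inside} the current block (the distance from $x^i_j$ to the correctly spaced lattice point relative to the block's own left endpoint), which is bounded by $\epsilon^{j_p}_{k-1} \le \max_{1\le \ell \le m} \epsilon^\ell_{k-1}$; and the accumulated offset error coming from the fact that the left endpoint $s_p$ of the current block is itself displaced from its ideal lattice position $(\text{number of tiles before block } p) \cdot \dens^{-1}(\Lambda)$.

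The crux is to control this second, accumulated offset error. Each preceding block of type $a_{j_\ell}$ contributes a displacement $x^{j_\ell}_{n^{j_\ell}_{k-1}} - n^{j_\ell}_{k-1}\cdot \dens^{-1}(\Lambda)$, and by Lemma \ref{lem:supertileerror} each such term is bounded in absolute value by $c_{j_\ell} c_\sigma^{k-1} \le \big(\max_\ell c_\ell\big) c_\sigma^{k-1}$. Summing over the at most $r$ preceding blocks, the total offset displacement is at most $r \cdot \big(\max_\ell c_\ell\big) \cdot c_\sigma^{k-1}$. Since the number $r = |\sigma(a_i)|$ of blocks is bounded by $\max_i |\sigma(a_i)|$, a constant depending only on $\sigma$, this whole contribution is $\le C \cdot c_\sigma^{k-1}$ with $C := \big(\max_i |\sigma(a_i)|\big)\cdot\big(\max_\ell c_\ell\big)$, which depends neither on $k$ nor on $i$.

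Combining the two contributions gives $|x^i_j - j\cdot\dens^{-1}(\Lambda)| \le \max_{1\le \ell\le m}\epsilon^\ell_{k-1} + C c_\sigma^{k-1}$, and taking the maximum over all $j$ with $0 < j \le n^i_k$ yields the claimed recurrence for $\epsilon^i_k$. I expect the main obstacle to be bookkeeping the indices cleanly: one must carefully verify that the ideal lattice position of the $j$th point of the whole supertile factors exactly into (the ideal position of the current block's left endpoint) plus (the ideal within-block position), so that the single offset term $s_p - (\text{tiles before}) \cdot \dens^{-1}(\Lambda)$ is precisely the accumulated sum of the per-block displacements controlled by Lemma \ref{lem:supertileerror}. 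Once this alignment is set up correctly, the bound on the number of summands and the uniform bound from the previous lemma make the rest routine.
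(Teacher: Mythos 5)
Your proof is correct and follows essentially the same route as the paper: decompose the level-$k$ supertile into the level-$(k-1)$ supertiles prescribed by $\sigma(a_i)$, bound the within-block error by $\max_j \epsilon^j_{k-1}$, and bound the accumulated left-endpoint offset by summing the per-block displacements from Lemma \ref{lem:supertileerror} over the at most $\max_i|\sigma(a_i)|$ preceding blocks. Your constant $C=\bigl(\max_i|\sigma(a_i)|\bigr)\cdot\bigl(\max_\ell c_\ell\bigr)$ is exactly the paper's $C=C_{max}N$, and your explicit bookkeeping of the index decomposition $j = \sum_{\ell<p} n^{j_\ell}_{k-1} + j'$ is the alignment the paper leaves implicit.
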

\begin{proof}
Let $N$ be the maximum number of tiles in the level-1 supertile, and let $C_{max}$ be the maximum of constants $c_j$ from Lemma \ref{lem:supertileerror}. Then we can take $C=C_{max}N$.

Indeed, assume that the maximum for $\epsilon^i_k$ is attained at the 
point $x^i_t$. Then this point falls into one of $n^i_1$ supertiles of level $k-1$ 
that constitute the level-$k$ supertile. Assume it falls in the $s$th supertile 
from left. Then according to Lemma \ref{lem:supertileerror} each of the first 
$s-1$ level $k-1$ supertiles contributes an error of at most $C_{max}c_\sigma^{k-1}$, and the 
error in the last supertile is at most $\max\limits_{1\leq j\leq m}\epsilon^j_{k-1}$. 
Since $s\leq n^i_1\leq N$ we obtain
\[\epsilon^i_k\leq (s-1)C_{max}c_\sigma^{k-1}+\max\limits_{1\leq j\leq m}\epsilon^j_{k-1}\leq \max\limits_{1\leq j\leq m}\epsilon^j_{k-1}+C\cdot c_\sigma^{k-1}. \qedhere\]
\end{proof}

\begin{thm} \label{thm:pisot-bde}
Let $\sigma$ be a primitive Pisot substitution in $\R$ such that the
substitution matrix $M_{\sigma}$ is diagonalizable over $C$. For each 
$\Lambda \in \X_{\sigma}$ holds $\Lambda \bd \dens^{-1}(\Lambda) \Z$. 
\end{thm}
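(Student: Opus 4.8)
The plan is to combine the two preceding lemmas to show that the errors $\epsilon^i_k$ stay uniformly bounded as $k \to \infty$, and then to deduce the bounded distance equivalence for the whole fixed point $\Lambda$ from this, finally extending to arbitrary $\Lambda' \in \X_{\sigma}$ via Theorem \ref{thm:hull-lattice}. First I would reduce to the case of a fixed point: as already noted in the discussion before the lemmas, we may assume $\sigma(u) = u$ for the bi-infinite sequence $u$ corresponding to $\Lambda$ (replacing $\sigma$ by a power if necessary, which does not change $\X_\sigma$, the density, nor the Pisot/diagonalizability hypotheses since the eigenvalues only get raised to a power). By Theorem \ref{thm:hull-lattice} it then suffices to prove $\Lambda \bd \dens^{-1}(\Lambda)\Z$ for this one fixed point.

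The heart of the argument is to iterate the recursion of Lemma \ref{lem:error}. Writing $\epsilon_k := \max_{1 \le i \le m} \epsilon^i_k$ and taking the maximum over $i$ on both sides of the inequality $\epsilon^i_k \le \max_j \epsilon^j_{k-1} + C c_\sigma^{k-1}$, I get $\epsilon_k \le \epsilon_{k-1} + C c_\sigma^{k-1}$. Unrolling this telescoping inequality down to some base level yields
\[
\epsilon_k \le \epsilon_1 + C\sum_{j=1}^{k-1} c_\sigma^{j-1} \le \epsilon_1 + \frac{C}{1-c_\sigma} =: C_0,
\]
a bound independent of $k$, precisely because $c_\sigma < 1$ makes the geometric series converge. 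Here I use the geometric-series estimate flagged in the paragraph of motivation before the lemmas, namely $C(c_\sigma^0 + c_\sigma^1 + \cdots) + C'$. Thus every point $x^i_j$ inside \emph{any} supertile satisfies $|x^i_j - j\,\dens^{-1}(\Lambda)| \le C_0$.

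To pass from supertiles to the full bi-infinite $\Lambda$, I would use that $\sigma(u) = u$ means $\Lambda$ is, up to translation, an increasing union of level-$k$ supertiles with left endpoint at a fixed origin; concretely, any point $x_j \in \Lambda$ (with $x_0 = 0$) lies in some sufficiently large supertile containing the origin, so the uniform bound $C_0$ applies to it. Matching $x_j \mapsto j\,\dens^{-1}(\Lambda)$ therefore defines a bijection $\phi\colon \Lambda \to \dens^{-1}(\Lambda)\Z$ with $|x_j - \phi(x_j)| \le C_0$ for all $j$, which is exactly the bounded distance equivalence $\Lambda \bd \dens^{-1}(\Lambda)\Z$. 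The claim for an arbitrary $\Lambda' \in \X_\sigma$ follows immediately from Theorem \ref{thm:hull-lattice}, since $\dens^{-1}(\Lambda') = \dens^{-1}(\Lambda)$ is a hull invariant by Theorem \ref{thm:eigv-dens}.

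The step I expect to require the most care is the last one: handling points on \emph{both} sides of the origin and verifying that the counting index $j$ used in $\phi$ is globally consistent across supertile boundaries (so that $\phi$ is genuinely a bijection onto $\dens^{-1}(\Lambda)\Z$ and not merely a local near-isometry). The nesting structure of a fixed point makes this work, but one must check that the two-sided enumeration of $\Lambda$ about $x_0 = 0$ agrees with the enumeration coming from the expanding supertiles; since the origin sits at a fixed position inside every $\sigma^k(a_i)$ under the fixed-point normalization, the local bounds glue to a global one. Everything else is the routine telescoping and the geometric-series convergence already prepared by Lemmas \ref{lem:supertileerror} and \ref{lem:error}.
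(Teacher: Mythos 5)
Your proposal is correct and follows essentially the same route as the paper: reduce to a fixed point via Theorem \ref{thm:hull-lattice}, telescope the recursion of Lemma \ref{lem:error} into a convergent geometric series to bound all $\epsilon^i_k$ uniformly, and conclude via the nested supertile structure. Your extra attention to the two-sided enumeration around $x_0=0$ is a detail the paper passes over with the phrase ``since $\Lambda$ is a nested sequence of supertiles,'' but it does not change the argument.
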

\begin{proof}
By Theorem \ref{thm:hull-lattice} it is sufficient to show the claim for 
some $\Lambda$ arising from a fixed point of $\sigma$. Since $\Lambda$ is 
a nested sequence of supertiles it is sufficient to show that all sequences 
$\epsilon^i_k$ are uniformly bounded. Lemma \ref{lem:error} implies 
$\epsilon^i_k\leq \max\limits_{1\leq j\leq m}\epsilon^j_{k-1}+C\cdot c_\sigma^{k-1}$ 
and therefore 
\[\max\limits_{1\leq j\leq m}\epsilon^j_{k}\leq \max\limits_{1\leq j\leq m}\epsilon^j_{k-1}+C\cdot c_\sigma^{k-1}.\]
Applying this inequality to the errors $\epsilon^j_{k-1},\epsilon^j_{k-2}$ and 
so on, we obtain that
\[\max\limits_{1\leq j\leq m}\epsilon^j_{k}\leq C\cdot c_\sigma^{k-1}+C\cdot c_\sigma^{k-2}+\ldots+C\cdot c_\sigma^{0}+\max\limits_{1\leq j\leq m}\epsilon^j_{0}.\]
So, each error $\epsilon^i_k$ does not exceed the sum of the infinite 
geometric series $\sum\limits_{i=0}^\infty Cc_\sigma^i$ with positive quotient 
$c_{\sigma}<1$ plus some constant. Thus all errors are bounded.
\end{proof}

\begin{rem} \label{rem:aabbab}
As mentioned above, some sources do not allow $0$ to be an eigenvalue of 
the matrix of a Pisot substitution. But our approach works in that case, too.
For instance, consider the substitution $a \mapsto aabb, b \mapsto ab$.
The substitution matrix is $\big( \begin{smallmatrix} 2 & 1\\ 2 & 1
\end{smallmatrix} \big)$ with eigenvalues 3 and 0. The natural tile
lengths are $\ell_1=2$, $\ell_2=1$, the frequencies are $\frac{1}{2}$
for both $a$ and $b$. The discrete point set $\Lambda$ arising from 
this substitution is nonperiodic and has density $\frac{2}{3}$. 
Theorem \ref{thm:pisot-bde} implies that $\Lambda \bd \frac{3}{2} \Z$.

Additionally, the same arguments will work not only if $M_\sigma$ is 
diagonalizable but also if some power of $M_\sigma$ is diagonalizable. 
The latter condition means that for any non-zero eigenvalue of $M_\sigma$ 
all Jordan cells have size $1$ but Jordan cells corresponding to the 
eigenvalue $0$ can be arbitrary.
\end{rem}

\section{BRS and bounded distance equivalence of CPS} \label{sec:cps-brs}

In the sequel we will exploit a correspondence between CPS and
BRS in $\R^d$. A relevant result in this context is the following theorem by Kesten
that gives a necessary and sufficient condition for an interval $[a,b] \subset
[0,1[$ to be a BRS with respect to some given slope $\alpha$. 

\begin{thm}[\cite{Kes}] \label{thm:kes}
Let $\alpha \in [0,1[$, $0 \le a < b \le 1$. Then $[a,b]$ 
is a BRS with respect to $\alpha$ if and only if $b-a \in \alpha \Z + \Z$.
\end{thm}

The ``if'' part of Theorem \ref{thm:kes} was known for a long time. 
It was shown by Hecke
\cite{Hec} (p 73) for $a=0$, using analytic number theory
(Dirichlet series of meromorphic functions). Ostrowski found a simple
argument for generalizing this to arbitrary $a$ \cite{Ost}. Kesten
settled the ``only if'' part by ``heavy use of continued fraction
expansions''. A simpler proof in a more general context was given by
Furstenberg, Keynes and Shapiro \cite{FKS}. The result was generalized to
sets that are not necessarily intervals by Oren \cite{oren}. The results
of Kesten and Oren have been proved by topological methods in \cite{KS}.
Several papers studied the higher dimensional analogues
of the problem, see \cite{GLev,HKK} for an overview.

The ``if''-part of Theorem \ref{thm:kes} can be proven by a simple
geometric argument \cite{DO1, HKK}. Here we want to give a brief sketch
of the idea on a very concrete level, since it illustrates nicely the
connection between BRS and CPS that we will make more precise below. 

Let $a=0$, and consider the set 
\[ \Lambda = \{ k \in \Z \, | \, k \alpha \bmod 1 \in [0,b[ \}. \]
We lift $\Lambda$ to $\R^2$ as follows:
Let $\Gamma:= \langle (1, \alpha)^T, (1,\alpha-1)^T \rangle_{\Z}$. 
In particular, $\Gamma$ contains all points $(k, k \alpha \bmod 1)^T$
where $k \in \Z$. 
If $\pi_1$ denotes the orthogonal projection to the first coordinate,
then $\Lambda$ consists of all elements $\pi_1(k, k \alpha \bmod 1)$ of $\Gamma$ 
with $0 \le k \alpha \mod 1 < b$; compare Figure \ref{fig:mod1-2}, left
part: the points in the gray strip are the points that are projected
to points in $\Lambda$. (Comparing this left part of the image with 
Figure \ref{fig:cps-fib} 
already gives an idea about the connection between CPS and BRS.)

The condition $b \in \Z + \alpha \Z$ means that there is
some lattice point $(k, k \alpha \bmod 1) \in \Gamma$ such that 
$b = k \alpha \bmod 1$. Rather than projecting all points orthogonally
to the line (by $\pi_1$) we project now in direction of 
$(k, k \alpha \bmod 1)$. With respect to the points in the strip
this is clearly a bounded distance transformation.

Fixing $k \in \Z \setminus \{0\}$ and attaching to each lattice point in 
$\Gamma$ a line segment $(k, k \alpha \bmod 1)$ (the ``flag-sticks'' in the image)
yields a set of parallel equidistant lines. Because of the particular 
property of $b$ being equal to $k \alpha \bmod 1$ for some appropriate 
$k \in \Z$, the line segments are in fact a partition of the lines (or 
more general, an $m$-fold covering of the lines). 
Projecting all points in some strip in direction $(k, k \alpha \bmod 1)$ 
then is the same as considering the intersection of the parallel lines 
with the $x$-axis. 

The intersection of the parallel equidistant lines with the $x$-axis is a 
periodic set $c \Z$. In fact, it is almost a CPS: it fulfils all
requirements but the one that the for projection $\pi$ on $\R$ holds
that $\pi: \Gamma \to \R$ is one-to-one. In fact this can be fixed easily
by changing the direction of projection slightly. Hence, by Lemma
\ref{lem:denscps} the density of the resulting (almost) CPS $c \Z$ 
is $\frac{b}{|\det(M_\Gamma)|} = b$ 
(since $\Gamma = \langle \big( \begin{smallmatrix} 1 \\ \alpha 
\end{smallmatrix} \big),  \big( \begin{smallmatrix} 1 \\ \alpha -1 
\end{smallmatrix} \big) \rangle_{\Z}$). Thus we obtain $c \Z = \frac{1}{b} \Z$.  
Altogether we have $\frac{1}{b} \Z \bd \Lambda$. The if-part of Kesten's 
theorem follows now like in the proof of Theorem \ref{thm:brs-cps} below. 

\begin{figure}
\includegraphics[width=.98\textwidth]{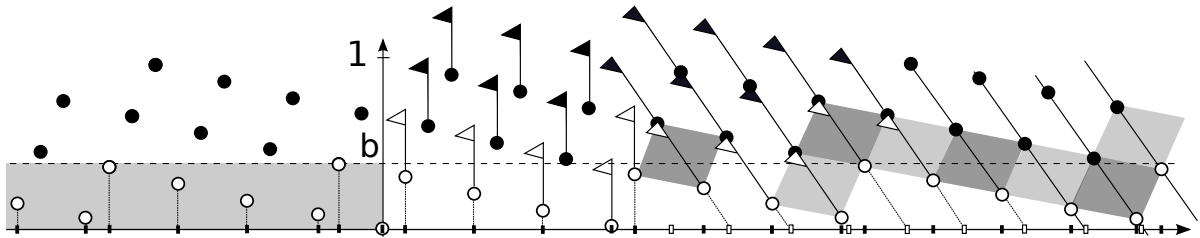}
\caption{A graphical sketch of the connection between CPS and BRS.
Projecting all points in 
the strip to the $x$-axis (left part) is the same as assigning to 
each point a flag-stick of height $b$, and projecting each point whose 
flag-stick is hit by the horizontal line through $(0,b)$ (middle).
By slanting the flag-sticks and projecting not orthogonally, but in
direction of the flag-sticks, the resulting set of projected points
becomes a periodic set (white rectangles, right).  \label{fig:mod1-2}}
\end{figure}

The argument above makes use of a relation between BRS and CPS: the
set $\Gamma$ above is the lattice in $\R \times \R$, the set $\Lambda$
corresponds to the CPS. What is lacking is the requirement that 
the projection $\pi_1|_{\Lambda}$ is one-to-one. This can be achieved
easily by changing the direction of projection, or equivalently, by applying  
a linear transformation $\big( \begin{smallmatrix} 1 & a\\ 0 & 1 
\end{smallmatrix} \big)$ to $\Gamma$.
In the sequel we want to translate an arbitrary CPS into the
canonical form used in the reasoning above, i.e. into a set 
that fulfils all the conditions of a CPS but one,
having lattice $\Gamma' = \langle v_1, \ldots, v_{d+1} \rangle_{\Z}$,
such that the first entry of each $v_i$ is 1. (Hence the condition not
fulfiled is that $\pi_1(\Gamma')$ is one-to-one.) Furthermore, the
window $W$ should fit into $[0,1]^d$. Let us call a point set with all
these properties a \emph{canonical quasi-CPS}, 
cqCPS for short. Then we can study whether $W$ is a BRS by considering 
whether the corresponding cqCPS is bounded distance equivalent to some lattice.

In order to fit the window $W$ into $[0,1]^d$ the following two 
trivial observations are helpful. 

\begin{lem} \label{lem:lotsof-cps}
A CPS $\Lambda$ with lattice $\Gamma$, $G=\R$,
$H=\R^d$, and $W \subset H$ is the union of $k^{d+1}$ CPS
with lattice translates $k \Gamma + t$ $(t \in \Gamma / k \Gamma)$ 
and the same $G$, $H$, $W$.  
\end{lem}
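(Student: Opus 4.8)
The plan is to show that the single CPS $\Lambda$ defined by the lattice $\Gamma$ and window $W$ decomposes as a disjoint union indexed by the cosets of $k\Gamma$ inside $\Gamma$. First I would recall that $\Lambda = \{\pi_1(x) \mid x \in \Gamma,\ \pi_2(x) \in W\}$, so every contributing lattice point $x \in \Gamma$ with $\pi_2(x) \in W$ lies in exactly one coset $k\Gamma + t$ of the sublattice $k\Gamma$. Since $\Gamma$ is a lattice in $\R \times \R^d \cong \R^{d+1}$, the index of $k\Gamma$ in $\Gamma$ is $[\Gamma : k\Gamma] = k^{d+1}$, and a set of coset representatives is exactly $t \in \Gamma/k\Gamma$, of which there are $k^{d+1}$. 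This gives the partition $\Gamma = \dotcup_{t} (k\Gamma + t)$.

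Next I would verify that each piece is itself a genuine CPS with the claimed data. Fix a coset representative $t$ and set $\Lambda_t = \{\pi_1(x) \mid x \in k\Gamma + t,\ \pi_2(x) \in W\}$. The lattice translate $k\Gamma + t$ is a translate of the lattice $k\Gamma$, and by the remark following Definition \ref{def:cps} a translate of a lattice is admissible in the cut-and-project scheme (translating $\Gamma$ amounts to translating $W$ and $\Lambda$). The maps $\pi_1, \pi_2$, the space $G = \R$, the space $H = \R^d$, and the window $W$ are all unchanged. One checks the two structural conditions: $\pi_1$ restricted to $k\Gamma + t$ is still one-to-one because it is one-to-one on all of $\Gamma$, and $\pi_2(k\Gamma + t)$ is dense in (a translate of) the relevant region so that the density hypothesis on $W$ is met; these follow immediately from the corresponding properties for $\Gamma$. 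Hence each $\Lambda_t$ is a CPS with lattice $k\Gamma + t$ and window $W$.

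Finally I would assemble the union. Since the cosets $\{k\Gamma + t \mid t \in \Gamma/k\Gamma\}$ partition $\Gamma$, a point $x \in \Gamma$ satisfies $\pi_2(x) \in W$ for exactly one coset, and therefore
\[
\Lambda = \{\pi_1(x) \mid x \in \Gamma,\ \pi_2(x) \in W\}
= \bigcup_{t \in \Gamma / k\Gamma} \{\pi_1(x) \mid x \in k\Gamma + t,\ \pi_2(x) \in W\}
= \bigcup_{t} \Lambda_t,
\]
a union of $k^{d+1}$ cut-and-project sets. This establishes the statement.

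I expect no serious obstacle here, as the lemma is flagged as trivial; the only point requiring a word of care is confirming that each coset translate $k\Gamma + t$ genuinely qualifies as a CPS lattice. This is exactly what the remark after Definition \ref{def:cps} licenses, so the verification reduces to the bookkeeping that the index of $k\Gamma$ in a full-rank lattice $\Gamma \subset \R^{d+1}$ equals $|\det(k I)| = k^{d+1}$, together with the observation that the defining window $W$ and projections are shared across all pieces.
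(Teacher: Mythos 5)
Your proof is correct: the paper states this lemma as a ``trivial observation'' and gives no proof at all, and your argument --- partitioning $\Gamma$ into the $k^{d+1}$ cosets of $k\Gamma$, checking that $\pi_1$ remains injective and $\pi_2$ remains dense on each coset, and taking the union --- is exactly the intended one. The only cosmetic point is that the claimed density of $\pi_2(k\Gamma+t)$ follows simply because $k\,\pi_2(\Gamma)+\pi_2(t)$ is again dense in $\R^d$ whenever $\pi_2(\Gamma)$ is.
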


\begin{lem} \label{lem:many-cps}
If $\Lambda_1, \ldots, \Lambda_{\ell}$ are disjoint CPS in $\R$ such that 
there is $a \in \R$ with $\Lambda_i \bd a \Z$ for $1 \le i \le \ell$, then 
$\Lambda_1 \dotcup \ldots \dotcup \Lambda_{\ell} \bd \frac{a}{\ell} \Z$.  
\end{lem}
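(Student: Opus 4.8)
The plan is to decompose the target lattice $\frac{a}{\ell}\Z$ into $\ell$ disjoint translates of $a\Z$ and to match each $\Lambda_i$ to one of these translates. Observe first that $a\Z$ is a subgroup of $\frac{a}{\ell}\Z$ of index $\ell$, with disjoint coset decomposition
\[ \tfrac{a}{\ell}\Z \;=\; \bigcup_{j=0}^{\ell-1}\Bigl(a\Z + \tfrac{ja}{\ell}\Bigr). \]
So it suffices to produce, for each $i$, a bijection with uniformly bounded displacement between $\Lambda_i$ and the coset $a\Z + \frac{(i-1)a}{\ell}$, and then to glue these bijections together into a single bijection $\Lambda_1 \dotcup \cdots \dotcup \Lambda_\ell \to \frac{a}{\ell}\Z$.

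For the individual matchings I would argue as follows. By hypothesis $\Lambda_i \bd a\Z$, so there is a bijection $\phi_i\colon \Lambda_i \to a\Z$ with $|x-\phi_i(x)|<c_i$ for all $x$. Writing $T_t$ for translation by $t$, i.e.\ $T_t(y)=y+t$, and setting $t_i := \frac{(i-1)a}{\ell}$, the map $T_{t_i}$ is a bijection $a\Z \to a\Z + t_i$ moving every point by exactly $|t_i|\le|a|$, so $a\Z \bd a\Z + t_i$. Since $\bd$ is an equivalence relation (in particular transitive), the composition $\psi_i := T_{t_i}\circ\phi_i \colon \Lambda_i \to a\Z + t_i$ is a bijection with $|x-\psi_i(x)| < c_i + |t_i|$ for all $x \in \Lambda_i$.

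Finally I would glue. Because the $\Lambda_i$ are disjoint with union $\Lambda := \Lambda_1 \dotcup \cdots \dotcup \Lambda_\ell$, and the cosets $a\Z+t_i$ $(1\le i\le \ell)$ are disjoint with union $\frac{a}{\ell}\Z$, the map $\Phi\colon \Lambda \to \frac{a}{\ell}\Z$ defined by $\Phi|_{\Lambda_i} = \psi_i$ is well defined and its displacement is bounded by the single constant $\max_{1\le i\le \ell}(c_i+|t_i|)$. This yields $\Lambda \bd \frac{a}{\ell}\Z$.

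The only point needing care --- and the closest thing to an obstacle --- is verifying that $\Phi$ is genuinely a \emph{bijection} rather than merely a map with bounded displacement: this is precisely where the disjointness hypothesis on the $\Lambda_i$ (source side) and the disjointness of the cosets (target side) enter. As a sanity check one may note that $\bd$ preserves density, so each $\Lambda_i$ has density $\frac1a$, whence the disjoint union has density $\frac{\ell}{a}=\bigl(\frac{a}{\ell}\bigr)^{-1}=\dens(\frac{a}{\ell}\Z)$, confirming the densities match. Note also that the CPS structure plays no role in the argument: it works verbatim for any family of pairwise disjoint discrete point sets each bounded distance equivalent to $a\Z$.
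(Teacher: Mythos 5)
Your proof is correct; the paper states this lemma as a ``trivial observation'' and gives no proof at all, and your coset decomposition of $\frac{a}{\ell}\Z$ into the $\ell$ translates $a\Z + \frac{(i-1)a}{\ell}$ together with the gluing of the individual bounded-distance bijections is exactly the intended argument. Your closing remark that the CPS hypothesis is never used is also accurate.
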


A more interesting result on the converse of Lemma \ref{lem:many-cps} 
is Lemma \ref{lem:lattice_divide} in Section \ref{sec:bde-cps}.

The following connection between BRS and CPS is spelled out already in 
\cite{H}, see also references therein, e.g.\ for the special case $d=1$.

\begin{lem} \label{lem:bde-cqcps}
Let $\Lambda \subset \R$ be a CPS with window $W \subset \R^d$ and 
lattice $\Gamma$. Then $\Lambda$ is bounded distance equivalent 
to some lattice $a\Z$ if and only if the corresponding 
cqCPS $\Lambda'$ constructed above is bounded distance equivalent 
to some lattice $b\Z$.  
\end{lem}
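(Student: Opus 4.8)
The plan is to show that the passage from the CPS $\Lambda$ to its cqCPS $\Lambda'$ is, up to a global scaling, a bounded-distance modification, and then to invoke transitivity of $\bd$ (noted in the Remark after the definition of bounded distance equivalence). Concretely, $\Lambda'$ is obtained from $\Lambda$ by three kinds of elementary operations, and I would check that each of them preserves the property ``$\bd c\Z$ for some $c$'' in both directions: (i) an invertible block-upper-triangular linear map $T=\left(\begin{smallmatrix} s & b^{T}\\ 0 & A\end{smallmatrix}\right)$ of $\R\times\R^{d}$ applied simultaneously to the lattice $\Gamma$ and the window $W$, used to normalize the lattice basis so that every spanning vector has first coordinate $1$ and to rescale the window; (ii) translations of $W$, which by the remark after Definition~\ref{def:cps} amount to bounded translations of the lattice and of $\Lambda$; and (iii) when an oversized window cannot be fit into $[0,1]^{d}$, a decomposition of $\Gamma$ into finitely many sublattices.

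The heart of the argument is step (i). Writing $\Gamma=\langle w_{1},\dots,w_{d+1}\rangle$ with $w_{i}=(g_{i},h_{i})$, the CPS defined (via the standard projections) by the transformed data $(T\Gamma, AW)$ is exactly $\{\, sg+b^{T}h \mid (g,h)\in\Gamma,\ h\in W\,\}$. Since $W$ is relatively compact we have $|b^{T}h|\le \|b\|\,\sup_{h\in W}\|h\|<\infty$, so the obvious correspondence $sg+b^{T}h\leftrightarrow sg$ exhibits this set as $\bd s\Lambda$. As $x\mapsto sx$ carries any lattice $c\Z$ to $sc\Z$ and multiplies all distances by $|s|$, it follows that $s\Lambda\bd c'\Z$ if and only if $\Lambda\bd (c'/s)\Z$. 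To reach the canonical lattice I would solve the linear system $sg_{i}+b^{T}h_{i}=1$, $i=1,\dots,d+1$, whose coefficient matrix is $M_{\Gamma}^{T}$ and hence invertible; then $Tw_{i}=(1,Ah_{i})$, and a further diagonal rescaling of the $H$-coordinates (which changes neither $\pi_{1}$ nor $\Lambda'$) shrinks the window. In the generic situation $s\neq0$ this already gives $\Lambda'\bd s\Lambda$, and the equivalence follows.

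Two degeneracies then have to be absorbed. If the solution of the above system has $s=0$, so that $T$ is singular, I would precompose with a small generic shear $\left(\begin{smallmatrix}1 & \delta^{T}\\ 0 & I\end{smallmatrix}\right)$ --- this is the ``slight change of projection direction'' from the informal Kesten argument, is itself a bounded-distance modification, and restores $s\neq0$. If the window still spills over several fundamental domains of $\Lambda_{H}:=\Gamma'\cap(\{0\}\times\R^{d})$, I would fold it back by $\Lambda_{H}$-translates; since translating $W$ by a vector of $\Lambda_{H}$ leaves $\Lambda'$ unchanged, this realizes $\Lambda'$ as a finite disjoint union of canonical sub-CPS coming from the sublattices $k\Gamma'+t$ of Lemma~\ref{lem:lotsof-cps}. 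The forward implication for such a union is then Lemma~\ref{lem:many-cps}, and the converse is its counterpart Lemma~\ref{lem:lattice_divide}.

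The step I expect to be the main obstacle is precisely this folding, because it is tied to the failure of injectivity of $\pi_{1}$ on the cqCPS: as a subset of $\Z$, $\Lambda'$ may carry several lattice points over a single integer, so the naive correspondence of step (i) need not be a genuine bijection of sets. Reconciling this multiset behaviour (which is what the densities and discrepancies actually see) with the set-level bounded distance equivalence is where Lemmas~\ref{lem:many-cps} and \ref{lem:lattice_divide} and the perturbation of the projection direction do the real work, and the delicate point is to verify that all these adjustments are uniformly bounded so that they compose into a single bounded distance equivalence between $\Lambda'$ and a scaled copy of $\Lambda$.
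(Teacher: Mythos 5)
Your proposal follows essentially the same route as the paper: the normalizing map is a block-triangular shear of $\R\times\R^{d}$ whose restriction to the strip $\R\times W$ is a bounded distance transformation precisely because $W$ is relatively compact, combined with the splitting Lemmas \ref{lem:lotsof-cps} and \ref{lem:many-cps} to fit an oversized window into a fundamental domain of the internal sublattice, and a rescaling/translation of the internal space that leaves the projected point set unchanged. The paper parametrizes the shear by first choosing a hyperplane sublattice $\Gamma_d=\Gamma\cap H'$ and mapping $H'$ onto $H$ (which sidesteps your degenerate case $s=0$), but otherwise the argument is the same.
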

Of course in this case $a = \dens^{-1}(\Lambda)$ and
$b = \dens^{-1}(\Lambda')$. 
\begin{proof}
Let us make the relation sketched above more precise. Let $\Lambda$ 
be some CPS with lattice $\Gamma \subset \R \times \R^d$ and window $W$.  
Choose a $d$-dimensional sublattice $\Gamma_d$ of $\Gamma$ that 
is maximal in the sense that $\Gamma_d = \Gamma \cap H'$, where $H'$ 
is some $d$-dimensional hyperplane. This step corresponds to 
\emph{choosing} the direction of projection: we want to project along 
some lattice vector (if $H=\R$), respectively along a $d$-dimensional
sublattice (if $H=\R^d$, $d \ge 1$).

By Lemmas \ref{lem:lotsof-cps} and \ref{lem:many-cps} we can assume
without loss of generality that there is a fundamental 
domain $F$ of $\Gamma_d$ whose projection $\pi_2(F)$ contains $W$ in 
its interior. (Otherwise we split the CPS $\Lambda$ into $k^{d+1}$ coarser 
ones using Lemma \ref{lem:lotsof-cps}. Showing that each of them is
bounded distance equivalent to some $k^{d+1} a \Z$ implies that 
$\Lambda \bd a \Z$.)

The next step is to \emph{change} the direction of projection, or 
alternatively: applying some linear map to the lattice $\Gamma$
(and keeping $\pi_1$ being the orthogonal projection to $G=\R$).
$H'$ can be written as $(l(y), y)^T$ ($y \in \R^d$) with $l: \R^d \to \R$ linear. 
Hence we apply the map $\phi: \, (x_1, x_2, \ldots, x_{d+1})^T
\mapsto (x_1-l(x_2, \ldots, x_{d+1}), x_2, \ldots, x_{d+1})^T$ 
to $\Gamma$. 
This transformation maps $H'$ to $H$ and hence $\Gamma_d$ to $H$. 
While $\phi$ is not a bounded distance transformation
with respect to the entire lattice $\Gamma$, it is one
with respect to $\Gamma \cap (\R \times W)$, since
$l$ is bounded on $W$. Hence the CPS $\Lambda$ will be
mapped to $\Lambda'$, where $\Lambda' \bd \Lambda$. 
Since $\Gamma':=\phi(\Gamma)$ consists of $d$-dimensional layers
of copies of $\pi_2(\Gamma_d)$
parallel to $H=\R^d$, $\Lambda'$ is already a subset of some 
periodic set $c \Z$. Without loss of generality let $c=1$.

By shifting $\Gamma'$ and $W$ along $H$ simultaneously we can 
achieve that the parallelepiped $\pi_2(F)$ has one vertex at 0. 
This shift leaves $\Lambda'$ invariant.
Transforming $\pi_2(F)$ into a unit cube $[0,1]^d$ 
(hence $\pi_2(\Gamma')$ into $\Z^d$) is achieved
by a linear map $\psi$ that does only affect the internal 
space $\R^d$, hence $\psi$ keeps $\Lambda'$ unchanged. Thus
$\Lambda'$ is a cqCPS.
\end{proof}

 
\begin{thm} \label{thm:brs-cps}
Let $\Lambda \subset \Z$ be a cqCPS with window $W \subset \R^d$, lattice 
$\Gamma$ and slope $\alpha$. Then $\Lambda$ is bounded distance 
equivalent to the lattice $\dens^{-1}(\Lambda) \Z$ if and only 
if $W$ is a BRS with respect to $\alpha$. 
\end{thm}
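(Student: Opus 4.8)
The plan is to unpack both sides into statements about counting lattice points of $\Gamma$ in the preimage of $W$, and to recognize that the two counting functions---the discrepancy $D_n(W,x)$ of the BRS definition and the matching-error between $\Lambda$ and $\dens^{-1}(\Lambda)\Z$---are the \emph{same} function up to the bounded-distance machinery already developed. First I would record the structure of a cqCPS: since $\Lambda \subset \Z$, the lattice $\Gamma$ contains all points $(n, \{n\alpha\})$ for $n \in \Z$ (where $\{\cdot\}$ denotes the fractional part, and $\alpha$ is the slope), and $\Lambda = \{ n \in \Z \mid (n\alpha) \bmod 1 \in W\}$ after the normalization of Lemma \ref{lem:bde-cqcps} that puts $\pi_2(\Gamma)$ into $\Z^d$ and $W$ into $[0,1]^d$. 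The key observation is that the number of points of $\Lambda$ in an interval $[0,n[$ is exactly $\sum_{k=0}^{n-1} 1_W\bigl((k\alpha) \bmod 1\bigr)$, which is $D_n(W,0) + n\mu(W)$ by the definition of discrepancy.

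Next I would connect this counting function to bounded distance equivalence via a counting criterion: $\Lambda \bd a\Z$ with $a = \dens^{-1}(\Lambda)$ holds if and only if the number of points of $\Lambda$ in any interval $[0,N[$ differs from $N/a = N\mu(W)$ (using $\dens(\Lambda) = \mu(W)$ for a cqCPS, by Lemma \ref{lem:denscps} with $|\det(M_\Gamma)|=1$) by a uniformly bounded amount. This is the one-dimensional specialization of the Laczkovich-type criterion invoked in the proof of Theorem \ref{thm:hull-lattice}: in dimension one the surface term is a constant, so $\Lambda \bd a\Z$ is equivalent to $\bigl| |\Lambda \cap [0,N[| - N\mu(W)\bigr|$ being bounded uniformly in $N$. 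I would state and briefly justify this one-dimensional counting criterion, taking care that both tails $N \to +\infty$ and $N \to -\infty$ are controlled (the latter handled symmetrically, or by noting that a uniform bound on $[0,N[$ for all $N \in \Z$ already covers both directions once we count signed).

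With both sides reduced to the boundedness of the \emph{same} quantity, the equivalence becomes almost immediate: $\Lambda \bd \dens^{-1}(\Lambda)\Z$ $\iff$ $\sup_N \bigl||\Lambda \cap [0,N[| - N\mu(W)\bigr| < \infty$ $\iff$ $\sup_N |D_N(W,0)| < \infty$ $\iff$ $W$ is a BRS with respect to $\alpha$. The last step uses that a uniform bound on the discrepancy at the single starting point $x=0$ along the orbit is equivalent to the BRS condition: this is where I would lean on the remark in the excerpt that ``the $x$ in the definition is negligible for our purposes,'' together with unique ergodicity of the irrational rotation, which propagates a bound at one generic starting point to almost every $x$ (and conversely, the almost-everywhere bound forces a bound along the dense orbit of $0$ by minimality).

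The main obstacle I expect is precisely this last reduction---reconciling the BRS definition, which quantifies over almost all $x \in [0,1[$, with the single orbit of $0$ that the CPS picks out. The clean way is to invoke that for a minimal and uniquely ergodic rotation, boundedness of $D_n(W,x)$ holds for almost every $x$ if and only if it holds for every $x$, and that the relevant constant is uniform; a boundary-measure-zero hypothesis on $\partial W$ (already implicit since $W$ is relatively compact with $\pi_2(\Gamma)$ dense) guarantees that $0$ is not an exceptional point and that the indicator $1_W$ causes no ambiguity on the orbit. I would make sure the direction of projection chosen in Lemma \ref{lem:bde-cqcps} does not place orbit points on $\partial W$, so that the counting is unambiguous, and otherwise the equivalence is a formal chaining of the three displayed equivalences above.
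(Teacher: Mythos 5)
Your proposal is correct and follows essentially the same route as the paper's proof: both rest on identifying the discrepancy sum $\sum_{k=0}^{n-1} 1_W\bigl(k\alpha \bmod 1\bigr)$ with the number of points of $\Lambda$ in $[0,n[$, and then translating the boundedness of this counting error into bounded distance equivalence to $\dens^{-1}(\Lambda)\Z$. The only cosmetic difference is that the paper carries out the last step via the explicit order-preserving matching $\lambda_m \mapsto m/\mu(W)$ rather than by citing the one-dimensional Laczkovich criterion, and your extra care about the ``almost all $x$'' issue addresses a point the paper's proof passes over silently.
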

\begin{proof}
Without loss of generality, let $\Lambda = \{ \ldots, \lambda_{-1}, 
\lambda_0=0, \lambda_1, \lambda_2, \ldots \}$. (If $0 \notin \Lambda$
we may consider $\Lambda-k$ for some $k \in \Lambda$.) 

By definition, $W$ is a BRS with respect to $\alpha$ if and only if there 
is $C>0$ such that for all $n$
\begin{equation} \label{eq:defic}
|D_n(W,x)| = |\sum_{k=0}^{n-1} 1_W(x+k \alpha \bmod 1) \; - n \mu(W)|<C.
\end{equation}
Let $\lambda_m \in \Lambda$ and let $n_m-1$ be the summation index yielding
$\lambda_m$; that is, $n_m-1=\lambda_m$, and
\[ m= \sum\limits_{k=0}^{n_m-1} 1_W(x+k \alpha \bmod 1). \]
Using $O$ notation, \eqref{eq:defic} becomes $m = n_m \mu(W) + O(1)$.
This is equivalent to 
\[ \frac{1}{\mu(W)} m = n_m + O(1) = n_m -1 + O(1), \]
hence to $\frac{1}{\mu(W)} m = \lambda_m + O(1)$. This means that the
distance between the $m$th point $\lambda_m$ of $\Lambda$ and the
$m$th point of $\frac{1}{\mu(W)} \Z = \dens^{-1}(\Lambda) \Z$ is
uniformly bounded in $m$, hence $\Lambda \bd \dens^{-1}(\Lambda) \Z$.
\end{proof}

Note that the transformations in the proof of Lemma \ref{lem:bde-cqcps},
transforming a CPS into a cqCPS, changes the direction of the slope $\alpha$,
as well as the shape of the window $W$. 
Whenever we use Theorem \ref{thm:brs-cps} together with Lemma \ref{lem:bde-cqcps}
in order to show that the window $W$ of some CPS is a BRS one has to read 
this as ``an affine image of $W$ is a BRS with respect to an appropriate 
slope $\alpha$''.

\section{BRS with fractal boundary}

In the sequel we need a notion of equidecomposability. 
\begin{defi}\label{defi:decompose}
Let $X$ and $Y$ be two subsets of $\R^d$, and let $V$ be a set of 
$d$-dimensional vectors. We say that $X$ and $Y$ are {\it $V$-equidecomposable} 
if there is a natural number $k$, a decomposition of $X$ into $k$ disjoint subsets 
\[ X=X_1\dotcup \ldots \dotcup X_k, \]
and $k$ vectors $\mathbf{v}_1, \ldots,\mathbf{v}_1 \in V$ such that
\[ Y=(X_1+\mathbf{v}_1)\dotcup \ldots \dotcup (X_k+\mathbf{v}_k). \]
\end{defi}
There are several profound results yielding conditions that imply that some 
given compact set in $\R^d$ is a BRS. For $d=1$ see for instance Theorem 
\ref{thm:kes}. For $d>1$ there are several results for special cases, see 
\cite{GLev, HKK} and references therein. A pretty general result was obtained 
by Grepstad and Lev.
\begin{thm}[\cite{GLev}] \label{thm:grep-lev}
Let $\alpha \in \R^d$ be completely irrational. That is, $\alpha=
(\alpha_1, \ldots, \alpha_d)$ such that $1,\alpha_1, \ldots, \alpha_d$ 
are linearly independent over $\Q$.
Then any zonotope in $\R^d$ with vertices belonging to $\Z^d + \alpha \Z$
is a BRS.\\
A Riemann measurable set $S \in \R^d$ is a BRS if and only if $S$ is 
($\Z^d + \alpha \Z$)-equidecomposable to some parallelepiped spanned 
by vectors in $\Z^d + \alpha \Z$.
\end{thm}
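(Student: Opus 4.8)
The plan is to reduce both assertions to a single sufficient criterion: if the mean--zero function $1_S-\mu(S)$ is an $L^\infty$--coboundary for the rotation $T_\alpha\colon x\mapsto x+\alpha$ on $\R^d/\Z^d$, i.e. $1_S-\mu(S)=g\circ T_\alpha-g$ for some bounded measurable $g$, then $S$ is a BRS, because the discrepancy is the telescoping Birkhoff sum $D_n(S,x)=g(x+n\alpha)-g(x)$. Two elementary reductions organise everything. First, discrepancy is additive over finite disjoint decompositions, and translating a piece by $v=\gamma+n\alpha\in\Z^d+\alpha\Z$ (with $\gamma\in\Z^d$, $n\in\Z$) acts on the torus as a shift by $n\alpha$, hence shifts the relevant Birkhoff sum by $n$ indices and changes $D_n$ by at most $2|n|$. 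Consequently \emph{$(\Z^d+\alpha\Z)$--equidecomposability preserves the BRS property}: if $S$ is equidecomposable to $P$ by such vectors, then $S$ is a BRS iff $P$ is. This already yields the ``if'' half of the characterisation, once parallelepipeds are handled. Second, a zonotope with vertices in $\Z^d+\alpha\Z$ has all edge vectors in $\Z^d+\alpha\Z$ (differences of group elements) and admits a standard zonotopal dissection into parallelepipeds, each spanned by $d$ of those edge vectors; by additivity it therefore suffices to prove that a single parallelepiped spanned by vectors in $\Z^d+\alpha\Z$ is a BRS.

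To treat a parallelepiped $P$ spanned by $v_1,\dots,v_d\in\Z^d+\alpha\Z$, I would build the transfer function $g$ on the Fourier side. With $V=[v_1|\cdots|v_d]$ the substitution $x=Vt$ factorises the Fourier coefficients as
\[\widehat{1_P}(m)=|\det V|\prod_{j=1}^d\frac{1-e^{-2\pi i\langle m,v_j\rangle}}{2\pi i\langle m,v_j\rangle},\]
and the coboundary equation prescribes $\widehat{g}(m)=\widehat{1_P}(m)/(e^{2\pi i\langle m,\alpha\rangle}-1)$ for $m\neq0$. The decisive arithmetic is that $v_j=\gamma_j+n_j\alpha$ with $\langle m,\gamma_j\rangle\in\Z$, so $1-e^{-2\pi i\langle m,v_j\rangle}=1-e^{-2\pi i n_j\langle m,\alpha\rangle}$ is divisible by $e^{2\pi i\langle m,\alpha\rangle}-1$ with bounded quotient $\sum_{l=0}^{n_j-1}e^{-2\pi il\langle m,\alpha\rangle}$. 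This cancellation removes the small--denominator blow--up when $\langle m,\alpha\rangle$ is near an integer, and it is exactly here that complete irrationality (so $\langle m,\alpha\rangle\notin\Z$ for $m\neq0$) and the hypothesis $v_j\in\Z^d+\alpha\Z$ enter. The one--dimensional prototype is transparent: with $\psi(t)=\{t\}-\tfrac12$ one has $1_{[1-\alpha,1)}-\alpha=\psi-\psi\circ T_\alpha$, which already proves the ``if''--part of Kesten's theorem (Theorem \ref{thm:kes}). The genuine analytic work is to promote the cancelled coefficients into a function $g\in L^\infty$, since bare Fourier data do not deliver boundedness; in higher dimensions the parallelepiped is not axis--aligned with any $\alpha$--compatible lattice, so the clean sawtooth product of the one--dimensional case does not transfer directly, and this boundedness is one of the two hard points.

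For the converse I would argue geometrically rather than analytically. A first necessary consequence---generalising Kesten's $b-a\in\Z+\alpha\Z$---is that $\mu(S)\in\Z+\alpha_1\Z+\cdots+\alpha_d\Z$, obtained from the boundedness of the Birkhoff sums by the same mechanism that forces the value group in one dimension. A cofactor expansion shows this module is precisely the set of determinants $\det[v_1|\cdots|v_d]$ realisable with $v_j\in\Z^d+\alpha\Z$ (every term placing $\alpha$ in two columns vanishes), so one may fix a target parallelepiped $P$ with $\mu(P)=\mu(S)$ and vertices in $\Z^d+\alpha\Z$. It then remains to cut $S$ into finitely many pieces that, translated by vectors of $\Z^d+\alpha\Z$, reassemble $P$. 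Here Riemann measurability is essential: the boundary is Lebesgue--null and the minimal rotation equidistributes across it, so a BRS of prescribed measure carries a \emph{bounded--displacement} matching between the $\alpha$--orbit cells of $S$ and those of $P$. I would upgrade such a matching into an honest finite equidecomposition by using that bounded displacement permits only finitely many cell types together with a Hall/transfer argument converting the matching into finitely many translation pieces in the dense group $\Z^d+\alpha\Z$.

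I expect this last step---turning the analytic BRS hypothesis into a finite, explicit cut--and--paste onto a parallelepiped---to be the principal obstacle. Proving boundedness of the parallelepiped's transfer function is delicate but is at least anchored by the factorised Fourier coefficients and the one--dimensional sawtooth; the converse, by contrast, has no explicit target and genuinely requires controlling how the rotation shuffles the pieces. This is the heart of the Grepstad--Lev argument and the step I would expect to consume the overwhelming majority of the effort.
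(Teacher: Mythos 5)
The paper does not prove Theorem~\ref{thm:grep-lev} at all: it is quoted from Grepstad and Lev \cite{GLev} and used as a black box, so there is no internal proof to compare yours against. Judged on its own terms, your outline gets the soft reductions right: the telescoping coboundary criterion; the observation that translating a piece by $\gamma+n\alpha\in\Z^d+\alpha\Z$ merely re-indexes the Birkhoff sum and changes $D_N$ by $O(|n|)$, so that $(\Z^d+\alpha\Z)$-equidecomposability preserves the BRS property; and the dissection of a zonotope into parallelepipeds spanned by its edge vectors. These are indeed the organising steps of the Grepstad--Lev argument, and your verification of the one-dimensional sawtooth identity and of the value module $\mu(P)\in\Z+\alpha_1\Z+\cdots+\alpha_d\Z$ via multilinearity of the determinant is correct.

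However, the two statements that actually carry the theorem are left as acknowledged gaps, and they cannot be waved through. First, for a parallelepiped you only exhibit the formal Fourier solution $\widehat{g}(m)=\widehat{1_P}(m)/(e^{2\pi i\langle m,\alpha\rangle}-1)$ together with the cancellation of the resonant factor; this shows each coefficient is finite, not that $g$ exists as a bounded (or even integrable) function. The remaining denominators $\langle m,v_j\rangle$ still become arbitrarily small along infinitely many $m$, so the boundedness of the transfer function is precisely the small-divisor estimate at the heart of the theorem, and no argument is offered for it. Second, the converse is sketched as ``a bounded-displacement matching of orbit cells plus a Hall argument,'' but a perfect matching is far weaker than a finite equidecomposition into finitely many pieces each translated by a single vector of $\Z^d+\alpha\Z$; in \cite{GLev} this step rests on a substantive equidecomposability theorem of Laczkovich for Riemann measurable sets of equal measure (this is exactly where Riemann measurability enters), and your sketch neither invokes such a result nor supplies a substitute. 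You also assert, without proof, that a target parallelepiped of prescribed volume $\mu(S)$ with linearly independent spanning vectors in $\Z^d+\alpha\Z$ exists. In short, the proposal is a correct road map, but both of its load-bearing steps are missing.
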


The connection between BRS and cqCPS established in Theorem \ref{thm:brs-cps}
allows us to use Theorem \ref{thm:grep-lev} to construct several cqCPS
(or CPS) that are bounded distance equivalent to some lattice $a \Z$. 
But it allows us also to go the opposite direction: the window of any CPS 
$\Lambda \subset \R$ with $\Lambda \bd a \Z$ yields a BRS. Candidates for
such CPS are provided by Theorem \ref{thm:pisot-bde}: discrete
point sets $\Lambda$ generated by Pisot substitutions \emph{are} bounded
distance equivalent to some $a \Z$. Hence the Pisot conjecture (or Pisot
substitution conjecture) becomes important. In fact, the Pisot conjecture
appears in several equivalent formulations, and in several levels of
generalization \cite{ABBLS}. The formulation needed here is the following.
  
\begin{con} \label{con:pisot} 
Let $\sigma$ be a primitive substitution on $\mathcal{A}=\{1, \ldots, n\}$,
If $\sigma$ is a Pisot substitution, and the characteristic polynomial of
$M_{\sigma}$ is irreducible over $\Q$, then the vertex sets of the tilings 
defined by $\sigma$ (in the sense of Section \ref{sec:subst-cps}) are CPS. 
\end{con}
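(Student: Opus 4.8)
The plan is to reduce ``the vertex sets are cut-and-project sets'' to an equivalent spectral statement and then attack that statement through the geometry of the associated Rauzy fractal. First I would invoke the standard dictionary between model sets and pure point spectrum: a point set arising from a primitive substitution is a regular CPS (model set) precisely when the associated substitution dynamical system has pure discrete spectrum (equivalently, pure point diffraction). So it suffices to show that a primitive Pisot substitution whose matrix has irreducible characteristic polynomial generates a system with pure discrete spectrum. Note that irreducibility over $\Q$ forces the characteristic polynomial to be separable, so $M_\sigma$ is diagonalizable over $\C$ with $n$ distinct eigenvalues: the Pisot number $\lambda>1$ and its $n-1$ Galois conjugates, all of modulus $<1$.

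To reach the spectrum I would construct the Rauzy fractal. Let $\mathbb{E}^c \subset \R^n$ be the contracting invariant subspace of dimension $n-1$ spanned by the eigenvectors for the conjugates of $\lambda$, and let $\pi^c$ be the projection onto $\mathbb{E}^c$ along the expanding Perron direction. Fixing a fixed point $u = u_0 u_1 u_2 \cdots$ of $\sigma$ and forming its abelianized broken line, the closure of the image under $\pi^c$ of the partial sums, separated according to the letter $u_k$, yields compact pieces $\mathcal{R}_1, \ldots, \mathcal{R}_n$. These satisfy a graph-directed self-affine set equation driven by $M_\sigma$, so $\mathcal{R} = \bigcup_i \mathcal{R}_i$ is a self-affine compact set and is the natural candidate for the window $W$, with internal space $H = \mathbb{E}^c \cong \R^{n-1}$, lattice the image of $\Z^n$, and $\pi_1, \pi_2$ the projections onto the expanding line and onto $\mathbb{E}^c$. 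Irreducibility guarantees that $\Z^n$ projects to a genuine lattice in $H$ and that $\mathcal{R}$ has the correct Lebesgue measure for tiling.

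The heart of the matter is to prove that the pieces $\mathcal{R}_i$ tile $\mathbb{E}^c$ (equivalently the torus $\mathbb{E}^c / \pi^c(\Z^n)$) with pairwise measure-disjoint interiors. Measure-disjointness is exactly what promotes the almost-everywhere factor map onto the maximal equicontinuous (Kronecker) factor to an almost-everywhere bijection, and this is equivalent to pure discrete spectrum, hence to the CPS property. I would try to establish measure-disjointness via one of the combinatorial coincidence criteria that are all equivalent to pure discreteness in the Pisot case --- the strong coincidence condition of Arnoux and Ito, the super-coincidence condition of Ito and Rao, Solomyak's geometric overlap coincidence, or termination of the balanced-pair algorithm --- each of which is algorithmically checkable for any single substitution $\sigma$.

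The main obstacle, and the reason this is recorded as a conjecture rather than a theorem, is that none of these coincidence conditions is known to hold across the \emph{entire} class of irreducible Pisot substitutions once $n \ge 3$. In the two-letter case $n=2$ the condition can be verified in full generality (by results of Barge and Diamond, of Host, and of Hollander and Solomyak), so the conjecture is a theorem there; this is exactly the source of the partial answers the paper relies on. For $n \ge 3$ one can decide the condition substitution by substitution, but no uniform argument covering all such $\sigma$ is known, and producing one is precisely where any honest proof attempt stalls.
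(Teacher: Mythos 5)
The statement you were asked to prove is labelled as a \emph{conjecture} in the paper (the Pisot substitution conjecture), and the paper offers no proof of it; it only cites partial results, namely the two-letter case of Hollander and Solomyak (Theorem \ref{thm:pisot2letters}) and computer-verified families such as those in \cite{AGL}. Your write-up correctly recognizes this: you reduce the CPS property to pure discrete spectrum of the substitution dynamical system, outline the Rauzy fractal construction of the candidate window in the contracting hyperplane $\mathbb{E}^c$, identify measure-disjoint tiling of the pieces $\mathcal{R}_i$ as the crux, list the equivalent coincidence criteria, and then state plainly that no uniform argument is known for $n\ge 3$. That is an accurate account of the state of the art and matches exactly the framework the paper relies on (the equivalence of ``regular model set'' with pure point spectrum, and the coincidence/overlap algorithms of \cite{AL}). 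So there is no error to flag, but also no proof: what you have produced is a correct explanation of why the statement remains open, not a demonstration of it. The only substantive addition worth making explicit is that your first reduction (CPS $\Leftrightarrow$ pure discrete spectrum) is itself a nontrivial theorem in the aperiodic order literature and should be cited rather than asserted; with that caveat, your account agrees with how the paper treats the conjecture, namely as an input whose known special cases (two letters, small trace in three letters) are what actually power the constructions of bounded remainder sets with fractal boundary.
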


In the case of two letters the conjecture is known to be true, as 
shown by Hollander and Solomyak \cite{HS}.

\begin{thm} \label{thm:pisot2letters}
Let $\sigma$ be a primitive substitution on $\mathcal{A}=\{1, 2 \}$. 
If $\sigma$ is a Pisot substitution, and the characteristic polynomial 
of $M_{\sigma}$ is irreducible, then the tilings defined by $\sigma$ 
(in the sense of Section \ref{sec:subst-cps}) are CPS. 
\end{thm}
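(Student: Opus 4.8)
The plan is to realise the vertex set $\Lambda$ of a fixed point of $\sigma$ as a regular model set in the sense of Definition \ref{def:cps}, thereby establishing the two-letter case of Conjecture \ref{con:pisot}. First I would fix algebraic coordinates. Since the characteristic polynomial of $M_{\sigma}$ is an irreducible quadratic, its roots are the Perron--Frobenius eigenvalue $\lambda>1$ and its Galois conjugate $\lambda_2$ with $|\lambda_2|<1$ (the Pisot condition). Choosing the natural tile lengths $(\ell_1,\ell_2)$ from Theorem \ref{thm:natural} inside $\Z[\lambda]$, every vertex $x\in\Lambda$ lies in the rank-two module $\Z[\lambda]$. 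The Galois conjugation $\star\colon \Z[\lambda]\to\R$ determined by $\lambda\mapsto\lambda_2$ then gives an embedding $x\mapsto(x,x^{\star})$ whose image is a lattice $\Gamma\subset\R^2$, with $\pi_1$ the first coordinate and $\pi_2=\star$ the internal projection. This is precisely the lattice of a CPS with one-dimensional internal space $H=\R$.

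Second, I would define the candidate window $W$ as the closure of the internal projection $\{\,x^{\star}\mid x\in\Lambda\,\}$, the Rauzy fractal of $\sigma$. The key structural fact is that, once decomposed by the type of the tile immediately following each vertex, $W=W_1\cup W_2$ satisfies a set equation (a graph-directed iterated function system) governed by $\sigma$, in which the map acting on the internal side is multiplication by $\lambda_2$. Since $|\lambda_2|<1$ this is a contraction, so the set equation has a unique nonempty compact solution; hence $W$ is relatively compact and, by minimality of the substitution system, $\pi_2(\Gamma)$ is dense in $W$. By construction the forward inclusion $\Lambda\subseteq\{\,\pi_1(x)\mid x\in\Gamma,\ x^{\star}\in W\,\}$ holds.

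The substance of the theorem is the reverse inclusion, that the window cuts out exactly $\Lambda$ up to a set of measure zero. Equivalently, one must show that the pieces $W_1,W_2$ overlap only in a null set and that their $\Gamma^{\star}$-translates tile $\R$ with covering multiplicity one; this forces $\mu(\partial W)=0$ and makes $\Lambda$ a regular model set. The standard route is the strong coincidence condition of Arnoux--Ito and Dekking: it suffices to produce, for the two letters, an index $k$ and a letter occurring at the same position in both $\sigma^{k}(1)$ and $\sigma^{k}(2)$ with prefixes of equal natural length before that position.

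The main obstacle is precisely verifying this coincidence condition for every two-letter Pisot substitution and upgrading it to the CPS property. For two letters this is exactly the content I would cite: Barge and Diamond establish, by an analysis of balanced pairs of words under iterated substitution, that all two-letter Pisot substitutions with irreducible characteristic polynomial satisfy strong coincidence; then Hollander and Solomyak \cite{HS} promote coincidence to pure discrete dynamical spectrum, which is equivalent to $\Lambda$ being a regular model set. The delicate combinatorial heart is the Barge--Diamond argument showing that the coincidence must eventually occur. Once pure discreteness is in hand, the boundary estimate $\mu(\partial W)=0$ and the exact equality $\Lambda=\{\,\pi_1(x)\mid x\in\Gamma,\ x^{\star}\in W\,\}$ follow, verifying every requirement of Definition \ref{def:cps}.
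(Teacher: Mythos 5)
The paper does not prove this statement at all: it is quoted as a known theorem, with the entire burden carried by the citation to Hollander and Solomyak \cite{HS}. Your outline is therefore not competing with an in-paper argument but reconstructing the one in the cited literature, and it does so faithfully: the Galois embedding $x\mapsto(x,x^{\star})$ of $\Z[\lambda]$ as the lattice $\Gamma$, the Rauzy-fractal window defined by a contractive set equation with ratio $\lambda_2$, the identification of the reverse inclusion as the real content, and the division of labour between the Barge--Diamond verification of strong coincidence for two-letter irreducible Pisot substitutions and the Hollander--Solomyak promotion of coincidence to pure discrete spectrum. Two remarks. First, your last step --- ``pure discrete dynamical spectrum, which is equivalent to $\Lambda$ being a regular model set'' --- silently invokes a further nontrivial theorem (the characterization of regular model sets by pure point dynamical spectrum, due to Baake--Lenz--Moody and Lee--Moody--Solomyak); it is true in this setting but deserves its own citation, since Theorem \ref{thm:pisot2letters} as stated asserts the CPS property, not pure discreteness. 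Second, the density requirement in Definition \ref{def:cps} is that $\pi_2(\Gamma)$ be dense in $W$, which here follows simply from the irrationality of $\lambda_2$ (so $\Z[\lambda]^{\star}$ is dense in $\R$) rather than from minimality. Neither point is a logical gap; your proposal is a correct high-level account of the proof the paper outsources.
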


As a consequence of Theorems  \ref{thm:pisot-bde}, \ref{thm:brs-cps}, and 
\ref{thm:pisot2letters}, the window of the CPS of any Pisot substitution
on two letters is a BRS, if the inflation factor is some quadratic irrational.
Applied to the Fibonacci sequence this yields
nothing new; compare Example \ref{ex:fib2}: the window is an interval
with length $\tau$, hence by Kesten's theorem this is a BRS. More 
interestingly, there are examples where the window has a boundary
that is a fractal. Consider for instance the substitution 
$a \mapsto aab$, $b \mapsto ba$. The substitution matrix is
$\big( \begin{smallmatrix} 2 & 1\\ 1 & 1 \end{smallmatrix} \big)$, the 
eigenvalues are $\tau^2$ and $\tau^{-2}$. In particular, the inflation factor
$\tau^2$ is a Pisot number, and $\sigma$ is a Pisot substitution.
Hence by Theorem \ref{thm:pisot-bde} for the vertex set $\Lambda$ of 
the tilings of the line generated by $\sigma$ holds: $\Lambda \bd a \Z$.
By Theorem \ref{thm:pisot2letters} the tilings of the line generated by 
$\sigma$ are CPS. Hence by Theorem \ref{thm:brs-cps} the window of
this CPS is a BRS. The window for $\Lambda$ is indicated in Figure 
\ref{fig:windows} (left). 
\begin{figure}
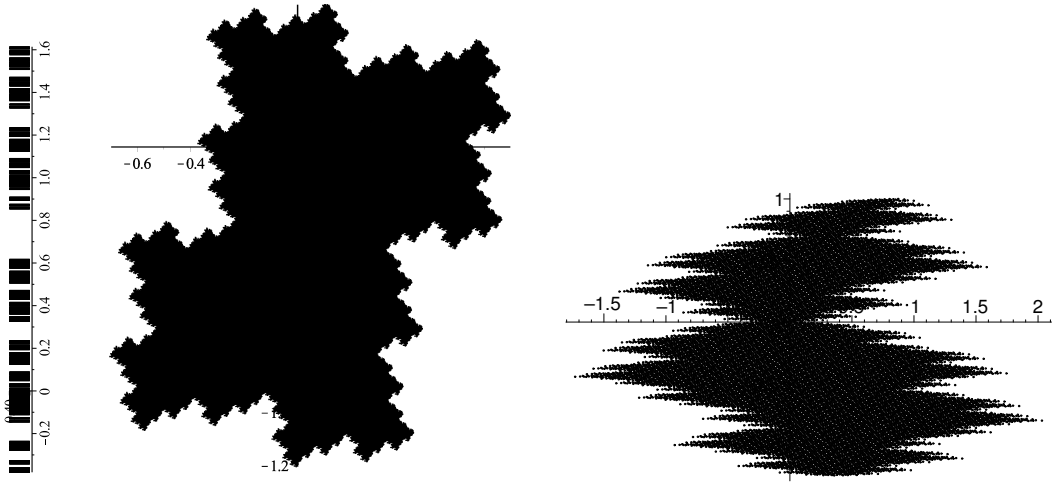

\includegraphics[angle=90,width=.05\textwidth]{fibsqflip-win-ax.pdf}
\includegraphics[width=.4\textwidth]{kol31-win-ax-px.png} 
\includegraphics[width=.4\textwidth]{kp7-win-ax.pdf}
\caption{The window for $a \mapsto aab$, $b \mapsto ba$ (left) is
one-dimensional. Here it is thickened for the sake of visualization.
The windows for $a \to abc$, $b \mapsto ab$, $c \mapsto b$ (middle)
and for $a \to abc$, $b \mapsto ab$, $c \mapsto a$ 
(right) are both BRS.
\label{fig:windows}}
\end{figure}
This window is similar to a (fat) Cantor set: it has infinitely many
connected components. Its total length is $\tau$, hence by Theorem
\ref{thm:grep-lev} it is not only a BRS, but equidecomposable to
the interval $[0,\tau]$. The window $W$ is the (measurewise disjoint) 
union of a coupled iterated function system with two sets $A$ and $B$:
\[ A = (\tau^{-2} A) \cup (\tau^{-2} A+1) \cup (\tau^{-2} B+2), \quad
B = (\tau^{-2} B) \cup (\tau^{-2} A+2) \]
By a theorem of Hutchinson \cite{Hut} such an iterated function system has
a unique nonempty compact solution $(A,B)$. Here we have $W=A \cup B$. 

In order to obtain higher dimensional BRS one can use Pisot 
substitutions with $d>2$ letters. Generically this requires
$H=\R^{d-1}$, yielding $(d-1)$-dimensional windows that are candidates of BRS.
Unfortunately we can not apply an analogue of Theorem 
\ref{thm:pisot2letters}: up to the knowledge of the authors the 
Pisot conjecture is still open for more than two letters.
The fractal nature of many window sets for substitutions
with more than two letters can make it tedious to prove that a given
Pisot substitution actually yields a CPS. In \cite{BS} it is
shown (on several pages) that the sequence generated by 
the Pisot substitution $a \to abc$, $b \mapsto ab$, $c \mapsto b$ 
indeed yields a CPS. Hence the window $W$ for this CPS is a BRS.
The slope for this example is $(\mbox{Re}(\beta), \mbox{Im}(\beta))^T$, 
where $\beta$ is the complex (non-real) root of $x^3-2x^2-1$ with positive 
imaginary part. This set is shown in Figure \ref{fig:windows} (middle).
The Hausdorff dimension of its boundary is $1.2167\ldots$ \cite{BS}.
The corresponding iterated function system is given in \cite{BS} or 
\cite[Example 7.5]{BG}. 

In such a way one gets a wealth of examples of BRS with fractal boundary. 
In \cite{AGL} the authors proved that all three-letter Pisot substitutions
where the trace of the substitution matrix is less than three
fulfil the Pisot conjecture. They checked all 446683 Pisot substitutions 
with this properties using a computer and the so-called potential
overlap algorithm \cite{AL}. One BRS arising from of these 
examples is shown in Figure \ref{fig:windows} (right), obtained 
from the Pisot substitution $a \to abc$, $b \mapsto ab$, $c \mapsto a$; 
hence with inflation matrix $\Big( \begin{smallmatrix} 1 & 1 & 1
\\ 1 & 1 & 0 \\ 1 & 0 & 0 \end{smallmatrix} \Big)$ \cite{F}. 
It is a BRS with respect to the slope $\alpha'=(\alpha_1, \alpha_2)$,
where $\alpha_1 = -0.8019\ldots$ and $\alpha_2=0.5549\ldots$ are the 
smaller roots of $x^3-2x^2-x+1$.

By Theorem \ref{thm:grep-lev} the last two examples are 
$(\Z^2 + (\mbox{Re}(\beta), \mbox{Im}(\beta))^T \Z)$-equidecomposable 
(respectively $(\Z^2+\alpha'\Z)$-equidecomposable) to parallelograms
spanned by vectors in $\Z^2 + (\mbox{Re}(\beta), \mbox{Im}(\beta))^T \Z$
(respectively in $\Z^2+\alpha'\Z$). See also \cite[Figure 7.3]{BG} or 
\cite[Figure 2.2]{ST} for a particularly fuzzy example of a set that 
actually is a BRS. 

\section{Bounded distance equivalence for two cut-and-project sets}
\label{sec:bde-cps}

Jamie Walton \cite{Wal} asked whether two CPS in the same hull are necessarily
bounded distance equivalent. In this section we will show that there
are two one-dimensional CPS in the same hull that are not bounded distance 
equivalent. Recall that the hull was defined for primitive substitutions
above. More general, the hull of a discrete point set $\Lambda \subset \R$
is defined as the closure of $\{ t + \Lambda \mid t \in \R \}$ under the
local topology, see \cite[Def.~4.7]{BG} for details. Roughly speaking, being in the 
same hull is the precise concept of $\Lambda$ and $\Lambda'$ being ``locally indistinguishable'', 
i.e., $\Lambda$ contains a translate of each finite pattern in $\Lambda'$
and vice versa. This is literally true only if $\Lambda$ and $\Lambda'$ 
fulfil certain nice properties, see \cite[Section 4.2]{BG}. 

For CPS this essentially boils down to the statement that
two CPS are in the same hull if they can be generated with the
same $G,H,\Gamma$, and the windows are translates of each other. (There
are some subtleties to consider, but this is literally true for $G=\R=H$ 
and the windows being half-open intervals.) So the hull of a CPS can
be obtained by considering all translations of the window $W$.

Theorem \ref{thm:win-bde} and Lemmas \ref{lem:hall}
and \ref{lem:lattice_divide} below are valid also in higher dimensions.
For this purpose note that CPS can live as well in $\R^e$ for some $e>1$:
just replace in Definition \ref{def:cps} of a CPS above $G=\R$ by $G=\R^e$.
  
\begin{thm} \label{thm:win-bde}
Let $\Lambda$ and $\Lambda'$ be two CPS defined with the same lattice $\Gamma$ and with the same projections $\pi_1$ and $\pi_2$ but with different windows $W$ and $W'$ respectively.

If the windows $W$ and $W'$ are $\pi_2(\Gamma)$-equidecomposable, then $\Lambda \bd\Lambda'$.
\end{thm}
\begin{proof}
Let $W=W_1\dotcup \ldots \dotcup W_k$ be the decomposition given in Definition \ref{defi:decompose}, and let $\mathbf{x}_i\in \Gamma$ be the vectors such that 
$$W'=(W_1+\pi_2(\mathbf{x}_1))\dotcup \ldots \dotcup (W_k+\pi_2(\mathbf{x}_k)).$$
In this case we can represent $\Lambda$ as disjoint union $\Lambda=\Lambda_1\dotcup \ldots \dotcup \Lambda_k$ where 
$$\Lambda_i=\{\pi_1(x) \mid x\in \Gamma, \pi_2(x)\in W_i\}.$$

Therefore the set 
\begin{multline*}\Lambda_i+\pi_1(\mathbf{x}_i)=\{\pi_1(x+\mathbf{x}_i) \mid x\in \Gamma, \pi_2(x)\in W_i\}=\\=\{\pi_1(x+\mathbf{x}_i) \mid x\in \Gamma, \pi_2(x+\mathbf{x}_i)\in W_i+\pi_2(\mathbf{x}_i)\}=\{\pi_1(y) \mid y\in \Gamma, \pi_2(y)\in W_i+\pi_2(\mathbf{x}_i)\}
\end{multline*}
is a subset of $\Lambda'$. Moreover, since the sets $W_i+\pi_2(\mathbf{x}_i)$ decompose $W'$ we can decompose $\Lambda'$ into
$$\Lambda'=(\Lambda_1+\pi_1(\mathbf{x}_1))\dotcup \ldots \dotcup (\Lambda_k+\pi_1(\mathbf{x}_1)).$$

We define $f:\Lambda\longrightarrow \Lambda'$ by $f(x):= x+\pi_1(\mathbf{x}_i)$ provided $x\in \Lambda_i$. This map is clearly a bijection and $||f(x)-x||$ is bounded by the length of the longest vector from $\mathbf{x}_i$.
\end{proof}

In particular two $\R^d\times\R$ CPS with windows $W$ and $W'$ differ by a translation of a vector from $\pi_2(\Gamma)$ are bounded distance equivalent (actually they are translated copies since $k=1$ in that case). Our next goal is to show that this condition can't be relaxed. Namely we will show that two halves of the Fibonacci sequence are not bounded distance equivalent although they can be constructed as $\R\times \R$ CPS with windows being intervals of equal length.

We are going to use the following theorem due to Rado \cite{Rado}. It is an infinite version of
Hall's marriage lemma.

\begin{lem}\label{lem:hall}
Consider a bipartite graph $G=(V_1 \cup V_2, E)$ with countable vertex sets 
$V_1$ and $V_2$ such that any vertex has finite degree. Assume that for any finite subset $X$ of $V_1$ there are at least $|X|$ vertices from $V_2$ that are connected with at least one vertex from $X$; and the same holds for the roles of $V_1$ and $V_2$
exchanged.  Then there exists a set of disjoint edges of $G$ that covers all vertices of $G$.
\end{lem}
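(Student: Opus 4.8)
The plan is to prove the infinite Hall/marriage statement (Lemma~\ref{lem:hall}) by reducing it to the finite Hall marriage theorem through a compactness argument. The key hypotheses are that the bipartite graph $G=(V_1\cup V_2,E)$ is \emph{locally finite} (every vertex has finite degree) and that Hall's condition holds \emph{symmetrically}: for every finite $X\subseteq V_1$ the neighbourhood $N(X)\subseteq V_2$ satisfies $|N(X)|\ge|X|$, and likewise with the roles of $V_1$ and $V_2$ exchanged. The conclusion we want is a perfect matching (a set of disjoint edges covering \emph{all} vertices of both sides).

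First I would recall the classical finite result: if a finite bipartite graph satisfies Hall's condition $|N(X)|\ge|X|$ for all $X$ on one side, then there is a matching saturating that side. The two-sided condition combined with finiteness would immediately force a perfect matching in the finite case. The main work is passing to the infinite (countable) setting. Here the standard device is a compactness / K\H{o}nig-type argument. The plan is to enumerate $V_1=\{p_1,p_2,\ldots\}$ and $V_2=\{q_1,q_2,\ldots\}$, and to build an exhausting sequence of finite induced subgraphs $G_1\subseteq G_2\subseteq\cdots$ whose union is $G$. For each $G_n$ one extracts (via finite Hall) a matching, and then one uses the local finiteness to run a diagonal argument: since each vertex $v$ has only finitely many incident edges, the sequence of matchings restricted to the edges at $v$ takes only finitely many values, so one can pass to a subsequence along which the matching edge (or non-edge) at $v$ stabilises. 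Diagonalising over all vertices produces a limit set of edges $M$ that is a matching.

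The delicate point is ensuring the limit matching is \emph{perfect}, i.e.\ that it covers every vertex and not merely that it is a matching. A naive diagonal limit of matchings of the finite subgraphs need not saturate a given vertex, because that vertex might be left unmatched in infinitely many of the finite approximations. This is where the two-sidedness of Hall's condition and local finiteness must be used together. The cleanest route I would take is to invoke the Rado formulation directly: because $G$ is locally finite and both deficiency versions of Hall's condition hold, one can apply finite Hall on an increasing exhaustion together with the fact that a vertex of finite degree, once ``claimed'' in enough finite stages, can be forced to stay matched. Concretely, I expect the argument to show that for each vertex $v$ there is a finite subgraph large enough that \emph{every} Hall-matching of it saturates $v$ (using $|N(X)|\ge|X|$ with deficiency zero on both sides to rule out a persistent unmatched vertex), and then the diagonal limit saturates every vertex.

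The hard part will be exactly this saturation step. In the finite case a Hall condition on one side gives a matching of that side but leaves no control over the other side; here the symmetric hypothesis is precisely what upgrades ``matching saturating one side'' to ``perfect matching'' in the limit, and making the compactness argument respect both sides simultaneously (so that no vertex of either $V_1$ or $V_2$ escapes the matching in the limit) is the crux. I would organise the proof so that local finiteness supplies the compactness needed for the diagonal subsequence, while the two-sided Hall condition supplies the saturation; combining them carefully — rather than treating the two sides separately — is the step most likely to hide a subtlety, and it is where I would spend the most care in the full write-up.
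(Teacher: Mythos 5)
The paper does not actually prove this lemma: it is quoted as a known theorem of Rado, with a citation in place of an argument. So there is nothing in-paper to compare your route against, and your sketch must stand on its own. Its overall architecture -- finite Hall plus a compactness/diagonal argument enabled by local finiteness -- is the standard and correct one, but there is a genuine gap at exactly the point you yourself identify as the crux, and the mechanism you propose there does not work as stated. A preliminary issue: an induced finite subgraph $G_n$ of an exhaustion need not satisfy Hall's condition at all, since neighbourhoods get truncated when you restrict to finitely many vertices of $V_2$; so ``extract a matching of $G_n$ via finite Hall'' is not available for an arbitrary exhaustion. The usual fix is to apply finite Hall to the subgraph induced on $X\cup N(X)$ for a finite $X\subseteq V_1$, which does inherit the condition on the $X$-side (this is how one proves the one-sided locally finite Hall theorem). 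The more serious problem is your proposed saturation step: the claim that for each vertex $v$ there is a finite subgraph so large that \emph{every} Hall-matching of it saturates $v$ is neither justified in your sketch nor true in general, and it is not the statement you need.

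The gap can be closed as follows. (i) By the one-sided locally finite Hall theorem, every finite $X\subseteq V_1$ is saturated by some matching of $G$, and symmetrically every finite $Y\subseteq V_2$ is. (ii) Given finite $X\subseteq V_1$ and $Y\subseteq V_2$, superimpose a matching $M_X$ saturating $X$ and a matching $M_Y$ saturating $Y$: every component of $M_X\cup M_Y$ is a path or a cycle, every endpoint of a path component lying in $X$ is incident to an $M_X$-edge and every endpoint lying in $Y$ to an $M_Y$-edge, and a parity check on path components rules out the bad case of two endpoints in $X\cup Y$ that cannot both be covered; hence each component, and therefore $G$, admits a matching saturating all of $X\cup Y$. (iii) The set of partial matchings is a closed subset of the compact product $\prod_{v}\bigl(E_v\cup\{\ast\}\bigr)$, where $E_v$ is the finite set of edges at $v$; the sets ``saturates the finite set $S$'' are closed and, by (ii), have the finite intersection property, so their intersection over all finite $S$ is nonempty, and any element of it is a perfect matching. (Equivalently, one can take a single matching saturating all of $V_1$ and one saturating all of $V_2$ and run the Cantor--Schr\"oder--Bernstein component argument once on the infinite graph.) The two-sided hypothesis enters exactly where you said it must, but through the concrete combinatorial step (ii), not through the unsubstantiated claim that all matchings of a large enough finite subgraph saturate a given vertex.
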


\begin{lem}\label{lem:lattice_divide}
Consider a discrete point set set $A$ in $\R^d$ such that $A\bd\mathbb{Z}^d.$ If $A$ is represented as $A=M_1\dotcup M_2\dotcup \ldots \dotcup M_n$ in such a way that $M_i\bd M_j$ then $M_i\bd n^{1/d} \mathbb{Z}^d.$
\end{lem}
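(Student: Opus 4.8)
The plan is to verify the criterion of Laczkovich for one of the pieces, exactly as in the proof of Theorem~\ref{thm:hull-lattice}, and then invoke transitivity of $\bd$. Since $M_i\bd M_j$ for all $i,j$, it suffices to prove $M_1\bd n^{1/d}\Z^d$; then $M_i\bd M_1\bd n^{1/d}\Z^d$ yields the claim for every $i$. First I would record the density bookkeeping: bounded distance equivalence preserves density, so as $A\bd\Z^d$ we have $\dens(A)=1$, and as the $M_i$ are mutually bounded distance equivalent they share a common density; since the union is disjoint these densities sum to $1$, so each equals $\tfrac1n$. This matches $\dens(n^{1/d}\Z^d)=\tfrac1n$, confirming that the target lattice in the criterion (with $a=\tfrac1n$, so $a^{-1/d}=n^{1/d}$) is the right one.

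The core of the argument is a surface-area comparison of the counting functions of the pieces. Fix a bounded-distance bijection $\phi\colon A\to\Z^d$ with displacement bounded by $c_0$, and bijections $\phi_i\colon M_1\to M_i$ with displacements bounded by $c_i$; set $c=\max_i(c_0,c_i)$. Let $H$ be any finite union of unit cubes. Since $|M_i\cap H|=|\{x\in M_1:\phi_i(x)\in H\}|$, the only $x$ for which exactly one of $x,\phi_i(x)$ lies in $H$ must satisfy $\mathrm{dist}(x,\partial H)<c$, because the segment joining $x$ and $\phi_i(x)$ then crosses $\partial H$. Hence
\[
\bigl||M_i\cap H|-|M_1\cap H|\bigr|\le \bigl|\{x\in M_1:\mathrm{dist}(x,\partial H)<c\}\bigr|
\le \bigl|\{x\in A:\mathrm{dist}(x,\partial H)<c\}\bigr|.
\]
To bound the last quantity I would push forward by $\phi$: each such $x$ gives a distinct lattice point $\phi(x)\in\Z^d$ with $\mathrm{dist}(\phi(x),\partial H)<c+c_0\le 2c$, and since $\Z^d$ is uniformly discrete the number of lattice points in the $2c$-tube around $\partial H$ is at most $C_1\,\lambda_{d-1}(\partial H)$ for a union of unit cubes. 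This yields $\bigl||M_i\cap H|-|M_1\cap H|\bigr|\le C_1\lambda_{d-1}(\partial H)$ for every $i$.

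With this estimate the conclusion is a short computation. Summing over $i$,
\[
\bigl| |A\cap H| - n\,|M_1\cap H| \bigr|
= \Bigl| \sum_{i=1}^n \bigl(|M_i\cap H|-|M_1\cap H|\bigr) \Bigr|
\le n\,C_1\,\lambda_{d-1}(\partial H),
\]
while Laczkovich's criterion applied to $A\bd\Z^d$ gives $\bigl||A\cap H|-\lambda_d(H)\bigr|\le C_A\,\lambda_{d-1}(\partial H)$. Combining and dividing by $n$,
\[
\Bigl| |M_1\cap H| - \tfrac1n\lambda_d(H) \Bigr| \le \Bigl(C_1+\tfrac{C_A}{n}\Bigr)\lambda_{d-1}(\partial H),
\]
which is precisely Laczkovich's criterion for $M_1\bd n^{1/d}\Z^d$, and transitivity of $\bd$ then finishes the proof. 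The step I expect to be the main obstacle is this boundary estimate: one must argue that all discrepancy between the pieces is concentrated in a tube of bounded width around $\partial H$ and that the number of points of $A$ in this tube is controlled by $\lambda_{d-1}(\partial H)$ rather than by volume. The clean way to achieve the latter is to transport the count to the uniformly discrete lattice $\Z^d$ through $\phi$, since the given set $A$ itself need not be uniformly discrete. (For $d=1$ one reads $\lambda_{d-1}(\partial H)$ as the number of endpoints of $H$, and the same argument applies, in the spirit of the converse to Lemma~\ref{lem:many-cps}.)
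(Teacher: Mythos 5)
Your proof is correct, but it takes exactly the route the paper deliberately avoids: the remark immediately after the lemma states that ``the statement can be easily deduced from Laczkovich's criterion, but we would like to provide a purely combinatorial proof,'' and the paper then argues via Rado's infinite version of Hall's marriage theorem (Lemma~\ref{lem:hall}). The paper reduces to $A=\Z^d$, builds a bipartite graph between $M_1$ and the sublattice $T=\{(nk_1,k_2,\ldots,k_d)\}$ with edges joining points at distance at most $c+n$, verifies the Hall condition in both directions using the mutual bounded-distance bijections among the $M_i$, extracts a perfect matching, and finally invokes Duneau--Oguey to pass from $T$ to $n^{1/d}\Z^d$. Your argument instead transports everything through the counting criterion: the estimate $\bigl||M_i\cap H|-|M_1\cap H|\bigr|\le C_1\lambda_{d-1}(\partial H)$, obtained by confining the discrepancy to a $c$-tube around $\partial H$ and controlling the number of points of $A$ there via the bijection onto the uniformly discrete $\Z^d$, is the one genuinely nontrivial step, and you handle it correctly; summing over $i$ and combining with the criterion for $A$ then gives the criterion for $M_1$ with density $\tfrac1n$. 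What each approach buys: yours is shorter and reuses the machinery already quoted in the proof of Theorem~\ref{thm:hull-lattice}, but it leans on both directions of Laczkovich's theorem (and, for $d=1$, on reading $\lambda_0(\partial H)$ as a point count, which the paper's statement of the criterion restricts to $d>1$); the paper's version is self-contained modulo Rado's theorem and produces the bounded-distance bijection explicitly rather than through an existence criterion. Your density paragraph is only a sanity check and could be dropped, since the final inequality delivers the density $\tfrac1n$ on its own.
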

The statement can be easily deduced from Laczkovich's criterion \cite{Lac} but we would like to provide a purely combinatorial proof below.

\begin{proof}
Without loss of generality we can assume that $A=\mathbb{Z}^d.$ It is enough to construct a bounded distance bijection between $M_1$ and the set of all integer points in $T:=\{(nk_1,k_2,\ldots,k_n) \, | \, k_i \in \Z\}$. Two lattices of the same density are bounded distance equivalent \cite[Theorem 1]{DO1}, so we have $n^{1/d} \mathbb{Z}^d \bd T$ and the claim of the lemma immediately follows from them. Let $c_{ij}$ be a distance in the bounded distance bijection between $M_i$ and $M_j$ and $c=\max c_{ij}$. 

Consider a bipartite graph with parts represented by points from $M_1$ and points from $T.$ Two vertices are connected with an edge if and only if the distance between these vertices is at most $c+n.$ We will show that this graph satisfies Lemma \ref{lem:hall}. For every $k$ and for any $k$ points from $M_1$ we have at least $kn$ lattice points in $A$ covered by $c$-balls centred at these points (we have at least $k$ points from each $M_i$). For every such lattice point $x$ we decrease the first coordinate to the closest integer divisible by $n$. In this way we will get a point $y\in T$ with distance at most $n$ from $x$. The point $y$ is at distance at most $c+n$ from one of the $k$ points from $M_1$ we used initially. For a fixed point $y\in T$ we reached $y$ at most $n$ times so we obtained at least $k$ different points from $T$ in $(c+n)$-balls centred at given $k$ points of $M_1$.

The same is true for any $k$ points from $T$: there are $kn$ points in this $n\times 1\times\ldots\times 1$ bricks and at least $k$ of them are from one $M_i$. Thus at least $k$ points from $M_1$ are at distance $c$ from these bricks and at distance $c+n$ from these points from $T.$ So the conditions of Lemma \ref{lem:hall} are fulfilled and we get the desired bounded distance bijection determined by the edges from Lemma \ref{lem:hall}.
\end{proof}

Recall that the Fibonacci sequence $\Lambda_F$ is the CPS with $d=1$, $W=[-\frac{1}{\tau}, 1[$, lattice $\Gamma = \langle \big(\begin{smallmatrix} 1\\ 1 \end{smallmatrix} \big), \big( 
\begin{smallmatrix} \tau \\ \frac{-1}{\tau} \end{smallmatrix} \big) \rangle_{\Z}$, 
and $\pi_1$ and $\pi_2$ are orthogonal projections.

Let $F_1$ and $F_2$ be two CPS with the same initial data as the Fibonacci sequence but with $W_1=[-\frac{1}{\tau}, \frac{1-\frac{1}{\tau}}{2}[$ and $W_2=[\frac{1-\frac{1}{\tau}}{2}, 1[$;  that is, $F_1$ and $F_2$ use disjoint halves of the window $W$. Therefore we call them {\it half-Fibonacci sequence(s)}. Note, that $F_1$ and $F_2$ use windows of the same length, so $F_1$ and $F_2$ are contained in the same hull, but these windows are not translated to each other by a vector from $\pi_2(\Gamma)$. 

\begin{thm} \label{thm:not-bde}
The Half-Fibonacci sequences $F_1$ and $F_2$ are not bounded distance equivalent.
\end{thm}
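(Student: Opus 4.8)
The plan is to argue by contradiction, using Lemma~\ref{lem:lattice_divide} to reduce the statement to the claim that $F_1$ is \emph{not} bounded distance equivalent to any lattice, and then to certify that non-equivalence through the BRS dictionary (Lemma~\ref{lem:bde-cqcps} and Theorem~\ref{thm:brs-cps}) together with Kesten's criterion (Theorem~\ref{thm:kes}).

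First I would set up the reduction. Since $W = W_1 \dotcup W_2$, we have a disjoint decomposition $\Lambda_F = F_1 \dotcup F_2$, and by Theorem~\ref{thm:pisot-bde} (equivalently by Example~\ref{ex:fib2}) we know $\Lambda_F \bd a\Z$ with $a = \dens^{-1}(\Lambda_F) = \sqrt5/\tau$. Suppose, for contradiction, that $F_1 \bd F_2$. Rescaling by $1/a$ gives $\tfrac1a\Lambda_F \bd \Z$ together with the decomposition $\tfrac1a\Lambda_F = \tfrac1a F_1 \dotcup \tfrac1a F_2$ into two mutually bounded distance equivalent pieces. Lemma~\ref{lem:lattice_divide} (with $d=1$, $n=2$) then yields $\tfrac1a F_1 \bd 2\Z$, that is $F_1 \bd 2a\Z$. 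Hence it suffices to derive a contradiction by showing that $F_1$ is bounded distance equivalent to no lattice at all.

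Next I would put $F_1$ into cqCPS form in order to apply Kesten. The crucial choice is to project the lattice $\Gamma$ along the lattice vector $v = (1,1)^T - (\tau,-\tfrac1\tau)^T = (-\tfrac1\tau, \tau)^T$, for which $|\pi_2(v)| = \tau$ equals the length of the full window $W$. Carrying out the transformation of Lemma~\ref{lem:bde-cqcps} (taking $\Gamma_1 = \Z v$ as the projection sublattice and rescaling the physical line by $\tau/\sqrt5$) turns the internal space into a circle of circumference $\tau$ on which the physical index $k$ advances by the rotation $k \mapsto k \bmod \tau$; no refinement via Lemma~\ref{lem:lotsof-cps} is needed, because $W$ already occupies exactly one internal period and $W_1\subset W$ fits comfortably inside it. On the normalized internal circle $\R/\Z$ this is the rotation by $\alpha = 1/\tau$, and $W_1$ becomes an interval whose length is $\mu(W_1)/\tau = (\tau/2)/\tau = 1/2$. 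Thus the cqCPS associated with $F_1$ is, up to bounded distance, the set $\{\,k\in\Z \mid k\alpha \bmod 1 \in J\,\}$ with $\alpha = 1/\tau$ and $J$ an interval of length $\tfrac12$.

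Finally I would invoke Lemma~\ref{lem:bde-cqcps} and Theorem~\ref{thm:brs-cps}: $F_1$ is bounded distance equivalent to a lattice if and only if the interval $J$ is a BRS with respect to $\alpha = 1/\tau$. By Kesten's theorem (Theorem~\ref{thm:kes}) this holds if and only if $\tfrac12 \in \alpha\Z + \Z = \tfrac1\tau\Z + \Z = \Z + \tau\Z$. Since $\tau$ is irrational, $\Z + \tau\Z$ contains no non-integer rational, so $\tfrac12 \notin \Z + \tau\Z$ and $J$ is not a BRS. Therefore $F_1$ is bounded distance equivalent to no lattice, contradicting $F_1 \bd 2a\Z$, and we conclude $F_1 \not\bd F_2$. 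I expect the main obstacle to be the middle step: bringing the CPS into canonical cqCPS coordinates and checking that the slope is $1/\tau$ while the window normalizes to length exactly $\tfrac12$ — in particular selecting the projection direction $v$ so that the window spans a single internal period. Once this normalization is pinned down, Kesten's criterion and the elementary fact $\tfrac12 \notin \Z + \tau\Z$ close the argument at once.
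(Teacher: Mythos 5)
Your proposal is correct and follows essentially the same route as the paper's proof: assume $F_1\bd F_2$, apply Lemma~\ref{lem:lattice_divide} to the decomposition of the Fibonacci set to get $F_1\bd 2a\Z$, and then contradict this via Kesten's theorem (Theorem~\ref{thm:kes}), since the half-window length $\tau/2$ (normalized, $1/2$) does not lie in $\Z+\tau\Z$. The only difference is that you carry out the cqCPS normalization of Lemma~\ref{lem:bde-cqcps} explicitly (and your slope $1/\tau$ and normalized window length $1/2$ check out), whereas the paper simply invokes Kesten's criterion in the form ``the window length is not the $\pi_2$-projection of any lattice vector.''
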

\begin{proof}
Assume $F_1\bd F_2$. The union $F=F_1\dotcup F_2$ is the Fibonacci set 
and according to Kesten's theorem \ref{thm:kes} is bounded distance 
equivalent to the lattice $\alpha\Z$ for $\alpha=\dens^{-1}(F)$. Then 
according to Lemma \ref{lem:lattice_divide} 
$F_1\bd F_2\bd 2\alpha\Z$.

However, according to Kesten's theorem \ref{thm:kes} $F_1$ is not bounded distance equivalent to a lattice since the length of the window does not equal to the length of $\pi_2$-projection of any vector of $\Gamma$. We've got a contradiction and hence $F_1$ and $F_2$ are not bounded distance equivalent.
\end{proof}

\section{Outlook}

The results of this paper raise several further questions. We would like
to point out a few of them.

\begin{enumerate}
\item In the context of Theorem \ref{thm:not-bde} one may ask: Given 
some discrete point set (or tiling), generated by substitution or by some 
cut-and-project method, of how many equivalence classes (with
respect to $\bd$) does its hull consist? Theorem \ref{thm:hull-lattice}
yields a partial answer to this question for substitution sequences: 
if for any $\Lambda \in \X_{\sigma}$ holds $\Lambda \bd a \Z$, then 
for all $\Lambda' \in \X_{\sigma}$ holds $\Lambda' \bd a \Z$, hence
there is only one equivalence class. In particular one may ask:
In the hull of some CPS, or some primitive substitution, is the number of  
equivalence classes with respect to $\bd$ always either one or infinite?
\item It would also be interesting to expand on Remark \ref{rem:aabbab}:
For any primitive substitution $\sigma$ with inflation factor $\lambda \in \Z$, 
can there be more than one equivalence class in the hull $\X_{\sigma}$?
Respectively, can there be a substitution sequence $\Lambda$ in
$\X_{\sigma}$ that is not bounded distance equivalent to any $a \Z$?
\item On a different note it might be worth to study whether the results of
this paper yield a BRS beyond the scope of Theorem \ref{thm:grep-lev}.
For instance it might be worth studying whether this construction 
yields some BRS with respect to a slope that is not completely irrational.
In all examples we tried the required affine maps always leave
$\Z^d + \alpha \Z$ invariant. Hence these examples yielded nothing 
beyond the scope of Theorem \ref{thm:grep-lev}. So we ask: is there
a CPS that yields a BRS not covered by Theorem \ref{thm:grep-lev}?
\end{enumerate}

\section*{Acknowledgements}
We are grateful to Franz G\"ahler and Jamie Walton for fruitful discussions,
in particular Section \ref{sec:cps-brs} (Franz) and Section \ref{sec:bde-cps}
(Jamie). We are also thankful to the anonymous referee whose remarks improved
the quality of the paper. DF acknowledges support by the Research Center for 
Mathematical Modelling of Bielefeld University.

\end{document}